\theoremstyle{plain}
\newtheorem{thm}{Theorem}[section]
\newtheorem{prop}[thm]{Proposition}
\newtheorem{lemma}[thm]{Lemma}
\newtheorem{cor}[thm]{Corollary}
\renewcommand{\latticebody}{\drop@{ }}
\theoremstyle{definition}
\newtheorem{defi}[thm]{Definition}
\theoremstyle{remark}
\newtheorem{remark}[thm]{Remark}
\newtheorem{ep}[thm]{Example}
\newcommand{\Z}{\ensuremath{\mathbb Z}}
\newcommand{\R}{\ensuremath{\mathbb R}}
\newcommand{\g}{\ensuremath{\frak{g}}}
\newcommand{\cL}{{\mathcal L}}
\newcommand{\cD}{\mathcal{D}}      
\newcommand{\cO}{\mathcal{O}}
\newcommand{\cM}{\mathcal{M}}
\newcommand{\su}{\ensuremath{\mathfrak{su}}}
\newcommand{\un}{\underline}
\def\PB(#1,#2,#3,#4){\left\{\begin{matrix}#1&\!\!\!\stackrel{?}{\longrightarrow}&\!\!\!#2\\
\downarrow&&\!\!\!\downarrow\\
#3&\!\!\!\stackrel{?}{\longrightarrow}&\!\!\!#4\end{matrix}\right\}}
\def\pb(#1,#2,#3,#4){ \hom(#1 \to #3, #2 \to #4)}
\newcommand{\pd}[1]{\frac{\partial}{\partial #1}} 
\newcommand {\mcomment}[1]{{\marginpar{\textcolor{blue}{\large{$\star\star$}}}\scriptsize{\bf \textcolor{blue}{Marco}:}\scriptsize{\ \nbt{#1} \ }}}
\newcommand {\emptycomment}[1]{}
\newcommand{\nbt}[1]{\textcolor{blue}{#1}}
\begin{document}

\title{Distributions and quotients on degree $1$ NQ-manifolds {and Lie algebroids}}

\author{Marco Zambon\footnote{
Universidad Aut\'onoma de Madrid (Dept. de Matem\'aticas), and ICMAT(CSIC-UAM-UC3M-UCM),
Campus de Cantoblanco,
28049 - Madrid, Spain.
\texttt{marco.zambon@uam.es},\;\texttt{marco.zambon@icmat.es}},
Chenchang Zhu\footnote{Courant Research Centre ``Higher Order Structures'',
University of G\"ottingen, Germany.
\texttt{zhu@uni-math.gwdg.de}}
\thanks{2010 Mathematics Subject Classification:   primary  53D17,
58A50,
 18D35.}\thanks{
Keywords: Lie algebroid, NQ-manifold, higher Lie algebra action, ideal system, quotients of Lie algebroids.
}
}

\date{}

\maketitle

\begin{abstract}
It is well-known that a Lie algebroid $A$ is equivalently described by a  degree 1 Q-manifold $\cM$.
We study distributions on $\cM$, giving a characterization in terms of $A$.
We show that involutive $Q$-invariant distributions on $\cM$ correspond bijectively to IM-foliations on $A$ (the infinitesimal version of Mackenzie's  ideal systems). We perform reduction by such distributions, and investigate how they arise from  non-strict actions of strict Lie 2-algebras on $\cM$.  \end{abstract}

\setcounter{tocdepth}{2}

\tableofcontents
 
 \newpage
\section*{Introduction}
\addcontentsline{toc}{section}{Introduction}
  
Lie algebroids play an important role in differential geometry and
mathematical physics. It is known that integrable Lie algebroids are
in bijection with source simply connected Lie groupoids
$\Gamma$. There is an aboundant literature studying geometric
structures on $\Gamma$ which are  compatible with the groupoid
multiplication, and how they   correspond to structures on the Lie
algebroid. Examples of the former are multiplicative foliations on
$\Gamma$, and examples of the latter are morphic foliations and IM-foliations, where ``IM'' stands
for ``infinitesimally multiplicative\footnote{This terminology was first introduced in  \cite[\S 3.1]{MaHenr}.}''

In this note we take a graded-geometric point of view on Lie algebroids, using their characterization as  NQ-1 manifolds.  The latter are  graded manifolds with coordinates of degrees $0$ and $1$, endowed with a self-commuting vector field $Q$ of degree $1$. Given a Lie algebroid $A$, we denote the corresponging NQ-1 manifold  by $\cM$.

In \S\ref{sec:di} we consider distributions on $\cM$ which are
involutive and $Q$-invariant. When regularity conditions are satisfied
we perform reduction by distributions in the graded geometry setting,
obtaining quotient NQ-1 manifolds. We show that, at the level of Lie algebroids, such distributions  
  correspond exactly to ideal systems on $A$, and that this reduction
  is the
reduction of $A$ by  ideal systems \cite{MK2}. 
Further we show that involutive, $Q$-invariant distributions
correspond bijectively to the IM-foliations on $A$ introduced   by
Jotz-Ortiz \cite{JotzOrtizFol}. To prepare the ground for the above results, at the beginning of  \S\ref{sec:di} we consider distributions on degree $1$ N-manifolds and  characterize  them in terms of ordinary differential geometry.
 
In \S\ref{sec:ac} we consider distributions that arise form certain actions. As an NQ-1 manifold $\cM$ has coordinates in degrees $1$ and $0$, its module of vector fields is generated in degrees $-1$ and $0$, and hence it is natural to act on $\cM$ by strict Lie-2 algebras $L$ (differential graded Lie algebras concentrated in degrees $-1$ and $0$).
We define such actions
  as $L_\infty$-morphisms from $L$ into the DGLA of vector fields on   $\cM$.  In general the ``image'' of an action fails to be an involutive distribution (however it is automatically preserved by $Q$). We give a sufficient condition for the involutivity, and  show that performing  reduction by the action one obtains a new NQ-1 manifold. 
  
 A possible explanation for the fact that the involutivity fails in
 general  is the following. In ordinary geometry, when a Lie algebra
 action on a manifold is  {(globally)} free and proper, the Lie
 groupoid of the transformation Lie algebroid is Morita equivalent to
 the quotient of the manifold by the action.  {Thus  the
   transformation Lie algebroid (an NQ-1 manifold) may be taken as a good replacement of the
   quotient when the action is no longer free and proper.}
In our setup it is shown in \cite{RajMarco} that
$L[1]\times \cM$ inherits a {degree 1} homological vector field
 (generalizing the notion of transformation Lie
algebroid). {The fact that} $L[1]\times \cM$ is an NQ-2 manifold suggests that 
 a  suitably defined quotient $\cM/L$ 
should not be an NQ-1 manifolds in general (it should be so only when certain ``regularity'' conditions are satisfied).
 \\

\noindent\textbf{Notation and conventions:}
$M$ always denotes a smooth manifold.  For any vector bundle $E$, we denote by $E[1]$ the N-manifold obtained from $E$  by declaring that the fiber-wise linear coordinates on $E$ have degree one. If $\cM$ is an N-manifold, we denote by $C(\cM)$
the graded commutative algebras of ``functions on $\cM$''. By
$\chi(\cM)$ we denote graded Lie algebra of vector fields on $\cM$ (i.e., graded derivations of $C(\cM)$).
The symbol $A$ always denotes a Lie algebroid over $M$.\\ 

\noindent\textbf{Acknowledgements:} 
We thank  Rajan Mehta, Pavol \v{S}evera and Jim Stasheff for their very helpful comments and for discussions. 

\noindent Zambon thanks the Courant Research Centre  ``Higher Order Structures" for hospitality.  Zambon was partially supported by the Centro de Matem\'atica da Universidade do Porto, financed by FCT through the programs POCTI and POSI,  by the FCT program Ciencia 2007, grants PTDC/MAT/098770/2008 and 
PTDC/MAT/099880/2008  (Portugal), and by MICINN RYC-2009-04065 (Spain).

\noindent
Zhu thanks C.R.M. Barcelona for hospitality. Zhu is supported by the German Research Foundation
(Deutsche Forschungsgemeinschaft (DFG)) through
the Institutional Strategy of the University of G\"ottingen.

\section{Background}

{To be self-contained,} we first recall some background material from \cite[\S 1.1]{ZZL1}.
More precisely:  we recall how the notion of degree 1 N-manifold is
equivalent to the notion of vector bundle (\S \ref{nnman}), and how
the notion of NQ-1 manifolds is equivalent to that of Lie algebroid
(\S \ref{enq}).

\subsection{N-manifolds}\label{nnman}

The notion of N-manifold (``N'' stands for non-negative) was introduced by \v{S}evera  in \cite{SWLett}\cite{s:funny}. Here we adopt the definition given by Mehta in  \cite[\S 2]{rajthesis}. Useful references are also  \cite[\S 2]{AlbICM}\cite{AlbFlRio}.

If $V=\oplus_{i< 0}V_i$ is a finite dimensional $\Z_{<0}$-graded vector space, recall that $V^*$ is the $\Z_{>0}$-graded vector space defined by $(V^*)_i=(V_{-i})^*$. We use $S^{\bullet}(V^*)$ to  denote the \emph{graded} symmetric algebra over $V^*$, so its homogeneous elements anti-commute if they both have odd degree.  $S^{\bullet}(V^*)$ is a graded commutative algebra  concentrated in positive degrees.
 
 Ordinary manifolds are modeled on open subsets of $\R^n$, and N-manifolds modeled on the following graded charts:
\begin{defi}\label{locm}
Let $V=\oplus_{i< 0}V_i$ be a finite dimensional $\Z_{<0}$-graded vector space.\\
 The \emph{local model for an N-manifold} consists  of a pair  as follows:
\begin{itemize}
\item $U\subset \R^n$ an open subset 
\item the sheaf (over $U$) of graded commutative algebras given by
$U'\mapsto C^{\infty}(U')\otimes S^{\bullet}(V^*)$.
\end{itemize} 
\end{defi}
\begin{defi}\label{def:grm}
An \emph{N-manifold} $\cM$ consists of a pair  as follows:
\begin{itemize}
\item a topological space $M$ (the ``body'')
\item a sheaf $\cO_M$ over $M$ of graded commutative algebras, locally isomorphic to the
above local model (the sheaf of ``functions'').
\end{itemize}
\end{defi}

We use the notation $C(\cM):=\cO_M(M)$ to denote the space of ``functions
on $\cM$''.
By $C_k(\cM)$ we denote the degree $k$ component of $C(\cM)$, for any non-negative $k$. The
{\em degree} of the graded manifold is the largest $i$ such that  $V_{-i}\neq\{0\}$.
Degree zero graded manifolds are just
 ordinary  manifolds:  $V=\{0\}$, and all functions have degree zero.

\begin{ep}\label{VBs}
Let $F=\oplus_{i< 0}F_{i} \rightarrow M$ be a   graded vector bundle.
The N-manifold associated to it has body $M$, and $\cO_M$ is given by the sheaf of sections of  $S^{\bullet}F^*$. \end{ep}

\begin{defi}
A \emph{vector field} on $\cM$ is a
graded derivation of the algebra\footnote{Strictly speaking one should define vector fields
in terms of the sheaf $\cO_M$ over $M$. However we will work only with objects defined on the whole of the body $M$, hence the above definition will suffice for our purposes. }
$C(\cM)$.
\end{defi}

Since $C(\cM)$ is a graded commutative algebra (concentrated in non-negative degrees), the space of vector fields $\chi(\cM)$,
equipped with the graded commutator $[-,-]$,  is a graded
Lie algebra (see Def. \ref{dgla}).

We will focus mainly on  degree 1 N-manifolds, which we now describe in more detail. To do so we recall first
\begin{defi}\label{CDO}
Given a vector bundle $E$ over $M$, a  \emph{covariant differential operator\footnote{Also known as derivative endomorphism, see \cite[\S 1]{MC-YKS}.} (CDO)}
is a linear map $X: \Gamma(E) \to \Gamma(E) $ such that there exists a vector field
$\underline{X} $  on $M$ (called \emph{symbol}) with
\begin{align}\label{eqCDO}
 X(f\cdot e) = \underline{X} (f) e + f \cdot X(e), \quad
\text{for} \; f \in C^\infty(M), \; e \in \Gamma(E).
\end{align}
We denote the set of CDOs on $E$ by
 $CDO(E)$.
If $X\in CDO(E)$, then
the dual  $X^*\in CDO(E^*)$ is defined by
\begin{equation}\label{dualCDO}
\langle X^* (\xi), e \rangle + \langle \xi, X(e) \rangle=\un{X}
(\langle \xi, e\rangle ), \quad \text{for all}\; e \in \Gamma(E), \;
\xi \in \Gamma(E^*).
\end{equation}
\end{defi}

Recall that if $E\rightarrow M$ is a (ordinary)  vector bundle, $E[1]$ denotes the graded vector bundle whose fiber over $x\in M$ is  $(E_x)[1]$ (a graded vector space concentrated in degree $-1$).

\begin{lemma}\label{N1}
If $E\rightarrow M$ is a  vector bundle, then
$\cM:=E[1] $ is a degree $1$
N-manifold with body $M$, and conversely all  degree $1$
N-manifolds arise this way.

The algebra of functions $C(\cM)$ is generated
by
$$C_0(\cM)=C^{\infty}(M) \text{  and  }
C_1(\cM)= \Gamma(E^*).$$

The $C(\cM)$-module of vector fields is generated by elements in degrees $-1$ and $0$. We have identifications
$$\chi_{-1}(\cM)=\Gamma(E)\text{  and  }\chi_0(\cM)=CDO(E^*)$$
 induced by the actions on functions. Further the map $\chi_0(\cM) \cong CDO(E)$  obtained dualizing CDOs is just $X_0\mapsto[X_0,\cdot]$ (using the identification   $\chi_{-1}(\cM)=\Gamma(E)$).
\end{lemma}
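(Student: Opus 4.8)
The plan is to work entirely in local coordinates and then glue, using the general theory of N-manifolds recalled in \S\ref{nnman}. First I would establish the identification of degree $1$ N-manifolds with vector bundles. By Definition \ref{locm}, a local model for a degree $1$ N-manifold over $U \subset \R^n$ has function sheaf $C^\infty(U') \otimes S^\bullet(V^*)$ with $V = V_{-1}$ concentrated in degree $-1$; since $V^*$ is then concentrated in degree $+1$ and its elements anticommute, $S^\bullet(V^*) = \Lambda^\bullet(V^*)$, the exterior algebra. This is exactly $\Gamma(\Lambda^\bullet E^*)$ for the trivial bundle $U \times V^* {}^*= U \times V \to U$ (cf. Example \ref{VBs} with $F = E[1]$). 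The transition data of an N-manifold atlas in degree $1$ consists of degree-preserving algebra isomorphisms of overlapping charts; such an isomorphism is determined by where it sends degree $0$ generators (coordinate changes on $M$, giving the manifold structure on the body) and degree $1$ generators (a $C^\infty$-linear invertible change of the $V^*$ coordinates, i.e. a vector bundle transition cocycle for $E^*$, equivalently $E$). Hence the N-manifold is exactly $E[1]$ for a genuine vector bundle $E \to M$, and conversely Example \ref{VBs} shows every such $E[1]$ is a degree $1$ N-manifold. This also yields the statement about $C(\cM)$: the algebra is generated by its degree $0$ part $C^\infty(M)$ and its degree $1$ part, which by the above is $\Gamma(E^*)$, with no relations beyond graded-commutativity (so $C(\cM) = \Gamma(\Lambda^\bullet E^*)$).

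Next I would identify the vector fields. A derivation $X$ of $C(\cM) = \Gamma(\Lambda^\bullet E^*)$ is determined by its action on generators, i.e. on $C^\infty(M)$ and on $\Gamma(E^*)$, subject to the Leibniz rule; since the generators sit in degrees $0$ and $1$, a homogeneous derivation has degree $\geq -1$, and the module $\chi(\cM)$ is generated over $C(\cM)$ by its components in degrees $-1$ and $0$. For degree $-1$: such an $X$ kills $C^\infty(M)$ (there is nothing in negative degree for it to land in) and maps $\Gamma(E^*) \to C^\infty(M)$ $C^\infty(M)$-linearly, hence is given by pairing with a section of $E$; conversely any $e \in \Gamma(E)$ defines via contraction $\iota_e$ a degree $-1$ derivation. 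This gives $\chi_{-1}(\cM) = \Gamma(E)$. For degree $0$: such an $X$ restricts to a vector field $\underline X$ on $M$ (a derivation of $C^\infty(M)$) and to an $\R$-linear map $\Gamma(E^*) \to \Gamma(E^*)$; the Leibniz rule $X(f\xi) = \underline X(f)\xi + f X(\xi)$ is precisely the CDO condition \eqref{eqCDO} on $E^*$, and conversely every CDO on $E^*$ extends uniquely to a degree $0$ derivation of $\Lambda^\bullet E^*$. This gives $\chi_0(\cM) = CDO(E^*)$.

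Finally, for the last assertion: given $X_0 \in \chi_0(\cM)$, the operator $[X_0, \cdot]$ maps $\chi_{-1}(\cM) = \Gamma(E)$ to itself (the graded Jacobi identity shows $[X_0, \iota_e]$ has degree $-1$ and kills $C^\infty(M)$), hence defines an $\R$-linear endomorphism of $\Gamma(E)$; one checks directly from $[X_0, \iota_{fe}] = \underline{X_0}(f)\iota_e + f[X_0,\iota_e]$ that it is a CDO on $E$ with symbol $\underline{X_0}$. That this CDO is exactly the dual $X_0^*$ of $X_0 \in CDO(E^*)$ follows by pairing: evaluating $[X_0, \iota_e]$ against a section $\xi \in \Gamma(E^*)$ and using $X_0(\langle \xi, e\rangle) = \langle X_0^*\xi, e\rangle + \langle \xi, [X_0,\cdot]e\rangle$ reproduces \eqref{dualCDO}. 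I expect the only mildly delicate point to be the bookkeeping that shows $\chi(\cM)$ has no components in degree $< -1$ and is generated in degrees $-1$ and $0$ — this is where one genuinely uses that $E$ is concentrated in a single negative degree — together with checking that the degree $0$ derivations extend uniquely from generators; everything else is a direct local-coordinate computation, glued by the already-established bundle structure.
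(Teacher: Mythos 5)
Your proposal is correct. Note that the paper itself gives no argument for this lemma --- the proof is simply a citation to \cite{ZZL1} --- so there is no ``paper approach'' to compare against; what you have written is the standard self-contained argument that such a reference would contain, and all the steps check out. In particular your verification of the last assertion is right: for $X_0\in\chi_0(\cM)=CDO(E^*)$ and $e\in\Gamma(E)$ one computes $[X_0,\iota_e](\xi)=\un{X_0}(\langle\xi,e\rangle)-\langle X_0(\xi),e\rangle$, which by \eqref{dualCDO} is exactly $\langle\xi, X_0^*(e)\rangle$, so $[X_0,\cdot]$ realizes the dual CDO on $E$. Two small points worth making explicit if you write this up: (i) the fact that degree-preserving transition isomorphisms must send degree-$1$ generators to $C^\infty$-linear combinations of degree-$1$ generators is special to degree $1$ (in higher degree the transitions can mix polynomial degrees, which is why general N-manifolds are not simply graded vector bundles); your argument uses this correctly but silently. (ii) The claim that $\chi(\cM)$ is generated over $C(\cM)$ in degrees $-1$ and $0$ deserves the one-line coordinate check that a degree-$k$ derivation is $\sum_i g_i\pd{x_i}+\sum_\alpha f_\alpha\pd{\xi^\alpha}$ with $g_i\in C_k(\cM)$, $f_\alpha\in C_{k+1}(\cM)$, glued by a partition of unity; you flag this as bookkeeping, which is fair.
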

\begin{proof} See \cite{ZZL1}.  
\end{proof}

\begin{remark}\label{coordsNQ1}
Let us choose
 coordinates $\{x_i\}$ on an open subset $U\subset M$ and a  frame $\{e_{\alpha}\}$ of sections of $E|_U$.
Let $\{\xi^{\alpha}\}$
be the dual frame for $E^*|_U$, and assign degree $1$ to its elements. Then $\{x_i, \xi^{\alpha}\}$
form a set of coordinates for $\cM:=E[1]$ (in particular they generate $C(\cM)$ over $U$).
The coordinate expression of vector fields is as follows. $\chi_{-1}(\cM)$ consist of elements of the form $f_{\alpha}\pd{\xi^{\alpha}}$, and $\chi_{0}(\cM)$ of elements of the form $g_{i}\pd{x_i}+
f_{ \alpha \beta}\xi^{\alpha}\pd{\xi^{\beta}}.$ Here  $f_{\alpha},g_{i},f_{ \alpha \beta} \in C^{\infty}(M)$, for $i\le dim(M)$ and $\alpha, \beta \le rk(E)$, and we adopt the Einstein summation convention.
\end{remark}

\subsection{NQ-manifolds and Lie algebroids}\label{enq}
We will be interested in N-manifolds equipped with extra structure:
\begin{defi}\label{NQm}
An {\em NQ-manifold} is an N-manifold   equipped
with a \emph{homological vector field}, i.e. a degree 1 vector field $Q$ such that $[Q, Q]=0$.
\end{defi}

To shorten notation, we call a degree $n$ NQ-manifold a {\em NQ-$n$ manifold}.

Before considering $\chi(\cM)$, we recall the notion of  differential graded Lie algebra (DGLA):
\begin{defi}\label{dgla}
  A \emph{graded Lie algebra} consists of   a graded vector space $L=\oplus_{i\in \Z} L_i$
together with a  bilinear bracket $[\cdot,\cdot] \colon L \times L \rightarrow L$ such that, for all  homogeneous   $a,b,c\in L$:
\begin{itemize}
\item[--] the bracket is degree-preserving: $[L_i,L_j]\subset L_{i+j}$
 \item[--] the bracket is graded skew-symmetric:
 $[a,b]=-(-1)^{|a||b|}[b,a]$
\item[--] the adjoint action $[a,\cdot]$
is a degree $|a|$ derivation of the bracket (Jacobi identity): $[a,[b,c]]=[[a,b],c]+(-1)^{|a||b|}[b,[a,c]]$ .
\end{itemize}
  A \emph{differential graded Lie algebra} (DGLA) $(L,[\cdot,\cdot], \delta)$ is a graded Lie algebra together with a linear $\delta : L \rightarrow L$ such that
\begin{itemize}
 \item[--] $\delta$ is a degree $1$ derivation of the bracket:
$\delta(L_i)\subset L_{i+1}$ and $\delta[a,b]=[\delta a,b]+ (-1)^{|a|}[a, \delta b]$
\item[--]  $\delta^2=0$.
\end{itemize}
\end{defi}

\begin{lemma} \label{lemma:vf}
For a NQ-$n$ manifold  $\cM$, the space of vector fields  $$(\chi(\cM),  [Q, -], [-,-])$$
is a negatively bounded DGLA with lowest degree $-n$.
\end{lemma}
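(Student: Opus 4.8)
The plan is to check the three structural requirements of Definition~\ref{dgla} for the triple $(\chi(\cM), [Q,-], [-,-])$, and then the boundedness claim. The underlying graded Lie algebra structure $(\chi(\cM),[-,-])$ is already available: $C(\cM)$ is a graded commutative algebra concentrated in non-negative degrees, so its derivations form a graded Lie algebra under the graded commutator, the bracket being degree-preserving and graded skew-symmetric with the graded Jacobi identity — this was recorded right after the definition of vector fields. So the only genuinely new points concern $\delta := [Q,-]$.

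First I would verify that $\delta$ is a degree $1$ linear endomorphism of $\chi(\cM)$: since $Q$ has degree $1$, the graded commutator $[Q,X]$ has degree $|X|+1$, hence $\delta(\chi_i(\cM))\subset \chi_{i+1}(\cM)$; linearity over $\R$ (or $\C$) is immediate. Next, that $\delta$ is a degree $1$ derivation of the bracket is exactly the graded Jacobi identity applied with $a = Q$: for homogeneous $X,Y$,
\[
[Q,[X,Y]] = [[Q,X],Y] + (-1)^{|Q||X|}[X,[Q,Y]] = [\delta X, Y] + (-1)^{|X|}[X,\delta Y],
\]
using $|Q|=1$. Finally, $\delta^2 = 0$: for any $X$, graded Jacobi gives $[Q,[Q,X]] = [[Q,Q],X] - [Q,[Q,X]]$, hence $2[Q,[Q,X]] = [[Q,Q],X] = 0$ because $[Q,Q]=0$ by the definition of a homological vector field; so $\delta^2 X = 0$. (Alternatively one notes $[Q,[Q,-]] = \tfrac12 [[Q,Q],-]$ directly.) This establishes the DGLA axioms.

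It remains to identify the grading range. By Lemma~\ref{N1} and its evident higher-degree analogue, on a degree $n$ N-manifold $C(\cM)$ is generated in degrees $0,1,\dots,n$, and a derivation is determined by its action on generators; a derivation lowering degree by more than $n$ would have to kill all generators of degree $\le n$ and hence vanish, so $\chi_i(\cM)=0$ for $i<-n$, while $\chi_{-n}(\cM)\neq 0$ (e.g.\ $\Gamma$ of the top-degree bundle, acting by contraction). Thus $\chi(\cM) = \bigoplus_{i\ge -n}\chi_i(\cM)$ is negatively bounded with lowest degree $-n$. I do not anticipate a real obstacle here; the one point requiring mild care is the sign bookkeeping in the derivation identity, which hinges on $|Q|=1$, and the observation that $[Q,Q]=0$ is genuinely needed for $\delta^2=0$ (for a general degree $1$ field $[Q,[Q,-]]$ is $\tfrac12[[Q,Q],-]$, which need not vanish).
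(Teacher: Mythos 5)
Your proof is correct, and it is the standard verification: the paper itself states Lemma~\ref{lemma:vf} without proof, treating it as well known. All three steps — the graded Jacobi identity with $a=Q$ giving the derivation property, $[Q,[Q,-]]=\tfrac12[[Q,Q],-]=0$ giving $\delta^2=0$, and the generation of $C(\cM)$ in degrees $0,\dots,n$ forcing any derivation of degree below $-n$ to vanish — are exactly what the authors leave implicit, so nothing further is needed.
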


A well-known example of NQ-manifolds is given by Lie algebroids \cite{MK2}.

\begin{defi}
A {\em Lie algebroid} $A$ over a manifold $M$ is a vector
bundle over $M$, such that the global sections of $A$ form a Lie
algebra with  Lie bracket $[\cdot,\cdot]_A$ and Leibniz rule holds
\[ [a, fa']_A=f[a, a']_A + \rho_A(a)(f) a' , \quad a, a' \in \Gamma(A), f
\in C^\infty(M), \]where $\rho_A: A \to TM$  is
  a vector
bundle morphism called the  \emph{anchor}.
\end{defi}

The following is well known (\cite{vaintrob}, see also \cite{yvette}):
\begin{lemma}\label{nq1la}
NQ-1 manifolds  are in bijective correspondence with Lie algebroids.
\end{lemma}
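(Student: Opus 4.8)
The plan is to establish the correspondence in both directions, using the previously recalled fact (Lemma \ref{N1}) that every degree $1$ N-manifold is of the form $A[1]$ for a vector bundle $A\to M$, and that its functions are generated by $C^\infty(M)$ in degree $0$ and $\Gamma(A^*)$ in degree $1$. A homological vector field $Q$ on $\cM=A[1]$ has degree $1$, so it is determined by its action on these generators: $Q$ sends $C^\infty(M)\to\Gamma(A^*)$ and $\Gamma(A^*)\to\Gamma(S^2A^*)=\Gamma(\Lambda^2A^*)$ (recalling that the degree $1$ generators anti-commute, so $C_2(\cM)=\Gamma(\Lambda^2 A^*)$). Identifying $C(\cM)$ with $\Gamma(\Lambda^\bullet A^*)$, such a degree $1$ derivation is exactly a degree $1$ operator $d_Q$ on the Grassmann algebra of $A$, and the Leibniz rule for the derivation $Q$ is precisely the statement that $d_Q$ is a derivation of the wedge product.

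First I would show how a Lie algebroid structure on $A$ produces such a $Q$: given the bracket $[\cdot,\cdot]_A$ and anchor $\rho_A$, define $d_Q$ on $\Gamma(\Lambda^\bullet A^*)$ by the usual Cartan–Koszul formula,
\begin{align*}
(d_Q\omega)(a_0,\dots,a_k)=&\sum_{i}(-1)^i\rho_A(a_i)\bigl(\omega(a_0,\dots,\widehat{a_i},\dots,a_k)\bigr)\\
&+\sum_{i<j}(-1)^{i+j}\omega([a_i,a_j]_A,a_0,\dots,\widehat{a_i},\dots,\widehat{a_j},\dots,a_k).
\end{align*}
One checks $d_Q$ is a well-defined degree $1$ derivation (tensoriality in each argument follows from the Leibniz rule of the algebroid), and $d_Q^2=0$ is equivalent to the Jacobi identity for $[\cdot,\cdot]_A$ together with $\rho_A$ being a bracket homomorphism. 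Translating $d_Q$ back into a vector field $Q$ on $\cM$ via the identification of functions gives the desired homological vector field; this is where one uses that $[Q,Q]=2Q^2$ corresponds to $2d_Q^2$.

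Conversely, given a homological $Q$ on $\cM=A[1]$, I would read off the Lie algebroid data from $d_Q:\Gamma(\Lambda^\bullet A^*)\to\Gamma(\Lambda^{\bullet+1}A^*)$. For $f\in C^\infty(M)=\Gamma(\Lambda^0A^*)$, the element $d_Q f\in\Gamma(A^*)$ is $C^\infty(M)$-linear in $f$ by the Leibniz rule only up to a first-order term — more precisely $d_Q(fg)=f\,d_Qg+g\,d_Qf$ shows $f\mapsto d_Qf$ is a derivation $C^\infty(M)\to\Gamma(A^*)$, which defines the anchor $\rho_A$ by $\langle\rho_A(a),df\rangle:=\langle d_Qf,a\rangle$ for $a\in\Gamma(A)$. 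For the bracket, one uses that $d_Q$ on $\Gamma(A^*)$ together with a Cartan-type formula $\langle d_Q\xi,a\wedge b\rangle=\rho_A(a)\langle\xi,b\rangle-\rho_A(b)\langle\xi,a\rangle-\langle\xi,[a,b]_A\rangle$ defines $[a,b]_A$; one must check this is well-defined (skew-symmetric, satisfies the Leibniz rule) using that $d_Q$ is a derivation and that $d_Q^2=0$ on degree $0$ elements recovers compatibility of $\rho_A$ with brackets, while $d_Q^2=0$ on degree $1$ elements yields the Jacobi identity. Finally I would note that these two constructions are mutually inverse, essentially by construction, and that the correspondence is functorial, though the statement as given only asserts a bijection.

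The main obstacle I anticipate is bookkeeping of signs and the precise translation between ``vector field $Q$ of degree $1$ on the N-manifold $\cM$'' and ``degree $1$ derivation $d_Q$ of the Grassmann algebra $\Gamma(\Lambda^\bullet A^*)$'': one must be careful that $C_2(\cM)$ is the \emph{antisymmetric} square $\Gamma(\Lambda^2A^*)$ (because the degree $1$ coordinates anticommute in $S^\bullet$), that $[Q,Q]=0$ is genuinely equivalent to $d_Q^2=0$ and not merely implied by it, and that the derivation property of $Q$ as a graded derivation of $C(\cM)$ matches the ordinary (ungraded-looking but sign-carrying) Leibniz rule for $d_Q$. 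Since this is well known, I would keep the verification of these points brief and refer to \cite{vaintrob} for the full check.
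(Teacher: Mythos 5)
Your proof is correct, and it establishes the same standard correspondence the paper invokes; the only real difference is in how the inverse direction is packaged. You recover the anchor and bracket from the action of $d_Q$ on the generators $C^\infty(M)$ and $\Gamma(A^*)$ via the Cartan formula $\langle d_Q\xi,a\wedge b\rangle=\rho_A(a)\langle\xi,b\rangle-\rho_A(b)\langle\xi,a\rangle-\langle\xi,[a,b]_A\rangle$, whereas the paper (following \cite{yvette}) uses the derived bracket construction on vector fields, $[a,a']_A=[[Q,a],a']$ and $\rho_A(a)f=[[Q,a],f]$, under the identification $\chi_{-1}(\cM)=\Gamma(A)$ of Lemma \ref{N1}. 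The two formulations are dual to one another and yield the same bijection. The derived-bracket version has the minor advantage that skew-symmetry, the Leibniz rule, and the Jacobi identity for $[\cdot,\cdot]_A$ fall out of the graded Jacobi identity for the commutator on $\chi(\cM)$ together with $[Q,Q]=0$, so the tensoriality-in-$\xi$ check you rightly flag is absorbed into general nonsense; your version has the advantage of making the identification $C(\cM)=\Gamma(\Lambda^\bullet A^*)$ and the role of the Koszul differential completely explicit. One cosmetic remark: stating that $d_Q^2=0$ is equivalent to ``Jacobi plus $\rho_A$ being a bracket homomorphism'' is slightly redundant, since the latter already follows from the Jacobi identity and the Leibniz rule of the algebroid; this does not affect correctness.
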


We describe the  correspondence using the derived bracket
construction.
By Lemma \ref{N1} there is a bijection between vector bundles and
degree 1 N-manifolds. If $A$ is a Lie algebroid, then the homological vector field  is just the   Lie algebroid differential acting on $ \Gamma(\wedge^{\bullet}A^*)=C(A[1])$. Conversely, if
$(\cM:=A[1], Q)$ is an NQ-manifold, then
the Lie algebroid structure on $A$ can be recovered by the derived bracket
construction \cite[\S 4.3]{yvette}: using the
 identification $\chi_{-1}(\cM)= \Gamma(A)$ recalled in Lemma \ref{N1}, we define
\begin{equation}\label{eq:br-rho}
[a,a']_A=[[Q,a],a'], \;\;\;\;\quad  \rho_A(a)f=[[Q,a],f],
\end{equation}
where $a,a'\in \Gamma(A)$ and $f\in C^{\infty}(M)$.

In coordinates the correspondence is as follows.
Choose coordinates $x_{\alpha}$ on $M$ and a frame of sections $e_i$ of $A$, inducing (degree $1$) coordinates $\xi_i$ on the fibers of $A[1]$. Then
\begin{equation}\label{QA}
Q_A=\frac{1}{2} \xi^j\xi^i
c_{ij}^k(x)\pd{\xi_k}+\rho_{i}^{\alpha}(x)\xi^i\pd{x_{\alpha}}
\end{equation}
where $[e_i,e_j]_A=c_{ij}^k(x)e_k$ and the anchor of $e_i$ is $\rho_{i}^{\alpha}(x)\pd{x_{\alpha}}$.

\section{Distributions}\label{sec:di}

In this section we study distributions on degree 1 N-manifolds and NQ-manifolds. In \S\ref{sec:disN} we characterize in classical terms distributions on degree 1 N-manifolds, and carry out reduction by involutive distributions. In \S\ref{sec:disNQ-1} we consider involutive, Q-invariant distributions on an NQ-1 manifold. We show that in classical terms they correspond to IM-foliations. When regularity conditions are satisfied -- classically the correspond to the notion of ideal system -- we perform reduction obtaining quotient NQ-1 manifolds.   

\subsection{Distributions  on degree 1 N manifolds}
\label{sec:disN}

Let $E\to M$ be a vector bundle. We define distributions on $E[1]$, following \cite{BCMZ}.
\begin{defi}
A {\em distribution} $\cD$ on the  degree 1 N-manifold $\cM:=E[1]$ is
a graded $C(\cM)$-submodule of $\chi(\cM)$ such that
 for any $x\in M$ there exists a neighborhood $U\subset M$ and homogeneous generators of $\cD$ over $U$ such that their evaluations at every point of $U$ are $\R$-linearly independent.
A distribution is \emph{involutive} if it is closed under the Lie bracket of vector fields on $\cM$.

Here the evaluation on $M$ of a vector field on $\cM$ is its image in  $\chi(\cM)/C_{\ge 1}(\cM)\chi(\cM)$, the space of sections of the graded vector bundle $T\cM|_M=E[1]\oplus TM\to M$.
\end{defi}

\begin{remark}
A graded $C(\cM)$-submodule $\cD$ of $\chi(\cM)$ is a distribution iff there exist $l\le dim(M)$, $\lambda\le rk(E)$ and, for any $m\in M$, coordinates $\{x_i,\xi_{\alpha}\}$ on $\cM=E[1]$ defined in a neighborhood $U$ of $m$ (see Remark \ref{coordsNQ1}) such that, over $U$,
   $\cD$ is generated as a $C(\cM)$-module by
 \begin{align*}
\left\{\pd{\xi_{\alpha}}\right\}_{\alpha\le \lambda} \text{ and }\left\{\sum_{j\le dim(M)} g_i^j(x)\pd{x_j}+\sum_{ \beta,\gamma\le rk(E)}f_i^{ \beta\gamma }(x)\xi_{\beta}\pd{\xi_{\gamma}}\right\}_{i\le l}
\end{align*}
 and the $\{\sum_{j} g_i^j(x)\pd{x_j}\}_{i\le l}$ are $\R$-linearly independent at every point of $U$. Notice that there is no condition on the coefficients $f^i_{\beta\gamma }$.
\end{remark}

Let $\cD$ be a distribution on $\cM$.
Denote by $\cD_i$ the degree $i$ component of $\cD$, consisting of the degree $i$ vector fields on $\cM$  which lie in $\cD$ ($i\ge -1$).

Recall that for any vector bundle $E$, there is an associated Lie algebroid  $\mathfrak{D}(E)$ whose sections are exactly $CDO(E)$ (see Def. \ref{CDO}) endowed with the commutator bracket, and whose anchor $s \colon \mathfrak{D}(E) \to TM$ is given by the symbol
 \cite[\S 1]{MC-YKS}.  $\mathfrak{D}(E)$ fits in an exact sequence of Lie algebroids   \begin{equation}\label{ses}
0\to End(E)\to \mathfrak{D}(E) \overset{s}\to TM \to 0
\end{equation}
where
 $End(E)$  denotes the vector bundle endomorphisms of $E$ that cover $Id_M$. The following lemma gives a characterization of distributions on $\cM$ in classical terms.

\begin{lemma}\label{distrCDO}
 There is a bijection between
distributions $\cD$ on $\cM:=E[1]$ and the following data:
\begin{itemize}
 \item subbundles $B\to M$ of $E$,
\item subbundles $C\to M$ of $\mathfrak{D}(E)$ for which $ker(s)\cap C=\{\phi\in End(E): \phi(E)\subset B\}$.\end{itemize}
The correspondence is
\begin{align}
\cD \mapsto \begin{cases}  B \text{ such that }\Gamma(B)=\cD_{-1} \\
C \text{ such that }\Gamma(C)=\cD_0,
\end{cases}
\end{align}
where we identify $\Gamma(E)=\chi_{-1}(\cM)$ and $\Gamma(\mathfrak{D}(E))=CDO(E)\cong\chi_{0}(\cM)$ as in Lemma \ref{N1}.

$\cD$ is involutive iff $C\subset \mathfrak{D}(E)$ is a Lie subalgebroid   and the   action of sections of $C$ preserves $\Gamma(B)$.
\end{lemma}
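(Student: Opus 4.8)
The plan is to first establish the bijection between distributions $\cD$ on $\cM = E[1]$ and pairs $(B,C)$, and then to characterize involutivity in terms of $(B,C)$. For the bijection, I would use the local normal form recorded in the Remark after the definition of distributions: locally $\cD$ is generated as a $C(\cM)$-module by $\{\partial_{\xi_\alpha}\}_{\alpha \le \lambda}$ together with operators $\{\sum_j g_i^j \partial_{x_j} + \sum_{\beta\gamma} f_i^{\beta\gamma}\xi_\beta \partial_{\xi_\gamma}\}_{i\le l}$ with the $\{\sum_j g_i^j \partial_{x_j}\}$ pointwise independent. Under the identifications $\chi_{-1}(\cM) = \Gamma(E)$ and $\chi_0(\cM) = CDO(E^*) \cong \Gamma(\fD(E))$ from Lemma \ref{N1}, the degree $-1$ part $\cD_{-1}$ is a $C^\infty(M)$-submodule of $\Gamma(E)$ that is locally free of constant rank with pointwise-independent local generators, hence the sections of a subbundle $B \subseteq E$; the degree $0$ part $\cD_0$ is likewise a locally-free submodule of $CDO(E)$, hence the sections of a subbundle $C \subseteq \fD(E)$. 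The constraint on $C$ comes from the $C(\cM)$-module structure: $\cD_0$ must contain $C_1(\cM)\cdot \cD_{-1}$, which under the identifications is exactly $\Gamma(\mathrm{End}(E))$-multiples landing in $B$; conversely the normal form shows $\ker(s)\cap C$ is precisely $\{\phi \in \mathrm{End}(E) : \phi(E) \subseteq B\}$, giving the stated condition. The inverse map takes $(B,C)$ to the $C(\cM)$-submodule generated by $\Gamma(B) \subseteq \chi_{-1}$ and $\Gamma(C)\subseteq \chi_0$; checking this is a distribution (i.e.\ has the required local independence of homogeneous generators) again follows from the normal form for $C$ near a point together with $\ker(s)\cap C = \{\phi : \phi(E)\subseteq B\}$, which guarantees the degree-$0$ generators not forced by $B$ have pointwise-independent symbols.

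For the involutivity statement, I would compute brackets degree by degree. Since $\cD$ is a graded $C(\cM)$-module generated in degrees $-1$ and $0$, involutivity is equivalent to closure under brackets of the generators, of which there are three types: $[\cD_{-1},\cD_{-1}] \subseteq \chi_{-2} = 0$ automatically; $[\cD_0, \cD_{-1}] \subseteq \cD_{-1}$; and $[\cD_0,\cD_0]\subseteq \cD_0$. Using that under $\chi_0(\cM)\cong\Gamma(\fD(E))$ the bracket $[X_0,\cdot]$ acting on $\chi_{-1}(\cM) = \Gamma(E)$ is just the CDO action of $X_0$ on $\Gamma(E)$ (this is exactly the last sentence of Lemma \ref{N1}), the condition $[\cD_0,\cD_{-1}]\subseteq\cD_{-1}$ translates into: the CDO action of sections of $C$ preserves $\Gamma(B)$. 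The condition $[\cD_0,\cD_0]\subseteq\cD_0$ translates into: $\Gamma(C)$ is closed under the commutator bracket of CDOs, i.e.\ $C$ is a Lie subalgebroid of $\fD(E)$. One still has to verify that module-closure follows from generator-closure, i.e.\ that $[\cD,\cD]\subseteq\cD$ once the generators bracket-close; this is the standard Leibniz-rule argument, using $[X, fY] = X(f)Y \pm f[X,Y]$ and the fact that $\cD$ is a $C(\cM)$-module and $X(f) \in C(\cM)$ for $X \in \chi(\cM)$, $f \in C(\cM)$ — here one needs $\cD_{-1}(C_1(\cM))\subseteq C_0(\cM)$ and similar, which hold. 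I would also note that the condition "$C$ is a subalgebroid" already forces its anchor image $s(C)$ to be involutive in $TM$, so no separate hypothesis on $B$ interacting with the base foliation is needed.

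The main obstacle I expect is the bijection direction, specifically verifying that an abstract graded $C(\cM)$-submodule $\cD$ satisfying the pointwise-independence axiom really decomposes as $\Gamma(B)\oplus_{C(\cM)}\Gamma(C)$-generated with $B, C$ honest subbundles — one must rule out pathologies where the degree-$0$ part fails to be locally free because of jumps in the "$\mathrm{End}$-part" hidden by the module structure. The clean way around this is to lean entirely on the local normal form from the Remark (which presumably is proved in \cite{BCMZ} or is elementary linear algebra over $C^\infty(M)$): once one knows $\cD$ has that shape locally, $B$ and $C$ are manifestly subbundles and the gluing of local pieces into global subbundles is automatic from uniqueness ($\Gamma(B) = \cD_{-1}$ and $\Gamma(C) = \cD_0$ are coordinate-independent). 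The involutivity computation, by contrast, is a routine unwinding of Lemma \ref{N1} once the degree bookkeeping is set up, so I would keep that part brief.
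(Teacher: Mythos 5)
Your proposal is correct and follows essentially the same route as the paper's proof: use the local generators from the definition of a distribution to identify $\cD_{-1}$ and $\cD_0$ with sections of subbundles $B\subset E$ and $C\subset \mathfrak{D}(E)$ (with the kernel condition coming from the $C_1(\cM)\cdot\cD_{-1}$ part of $\cD_0$), and reduce involutivity to the two generator conditions $[\cD_0,\cD_0]\subset\cD_0$ and $[\cD_0,\cD_{-1}]\subset\cD_{-1}$ via Lemma \ref{N1}. The extra details you supply (the Leibniz-rule step for passing from generator closure to module closure, and the worry about local freeness, which the pointwise-independence axiom rules out) are exactly the points the paper leaves implicit.
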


\begin{proof} Let $\cD$ be a distribution.
 By definition, locally there exist integers $l\le dim(M)$, $\lambda\le rk(E)$ as well as homogeneous generators $\{X^{\alpha}_{-1}\}_{\alpha\le \lambda}$
and $\{X^i_{0}\}_{i\le l}$
of $\cD$ whose evaluations at points of $M$, which are given by  $\{X^{\alpha}_{-1}\}_{\alpha\le \lambda}$ and $ \{s(X^i_{0})\}_{i\le l}$, are linearly independent.    $$\cD_{-1}=C^{\infty}(M)\cdot\{X^{\alpha}_{-1}\}_{\alpha\le \lambda}$$ hence consists of sections of a subbundle $B\subset E$.
We have $$\cD_0=C^{\infty}(M)\cdot \{X^{i}_{0}\}_{i\le l}+ C_1(\cM)\cdot \{X^{\alpha}_{-1}\}_{\alpha\le \lambda}.$$
The linear independence condition on the generators implies that $ker(s|_{\cD_0}) = C_1(\cM)\cdot \{X^{\alpha}_{-1}\}_{\alpha\le \lambda}$ is the space of sections of a subbundle of $End(E)$, and that $s(\cD_0)$  is the space of sections of a subbundle of $TM$.
Hence $\cD_0$ is the space of sections of a
 subbundle $C\subset \mathfrak{D}(E)$, and
$ker(s)\cap C=\{\phi\in End(E): \phi(E)\subset B\}$.

Conversely, given a pair $(B,C)$ as in the statement, defining $\cD_{-1}:=\Gamma(B)$ and
$\cD_0:=\Gamma(C)$ we obtain a distribution.

Since a distribution $\cD$ is generated (as a $C(\cM)$-module) by $\cD_{-1}$ and $\cD_0$, the involutivity of $\cD$ is equivalent to $[\cD_0,\cD_{0}]\subset \cD_{0}$ and $[\cD_0,\cD_{-1}]\subset \cD_{-1}$. The first condition means that $C$ is a subalgebroid of $\mathfrak{D}(E)$, the second (by Lemma \ref{N1}) that the sections of $C$ preserve $\Gamma(B)$.
\end{proof}

 We rephrase the classical data associated to
an involutive distributions as in Lemma \ref{distrCDO}. Here and in the following, if $X_0\in  CDO(E)$, we denote its symbol  by  $\un{X_0}:=s(X_0)\in \chi(M)$.

\begin{lemma}\label{BF}
Given a subbundle   $B\to M$ of $E$, there is a bijection between 
\begin{itemize}
\item Lie subalgebroids $C\to M$ of $\mathfrak{D}(E)$ for which $ker(s)\cap C=\{\phi\in End(E): \phi(E)\subset B\}$, such that   action of sections of $C$ preserves $\Gamma(B)$.
\item pairs $(F,\nabla)$ where $F$ is an integrable distribution on $M$ and $\nabla$ is a flat $F$-connection on the vector bundle $E/B$.
\end{itemize} 
\end{lemma}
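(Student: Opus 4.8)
The plan is to construct the two maps of the bijection explicitly and check they are mutually inverse. Given a Lie subalgebroid $C\subset\mathfrak{D}(E)$ as in the statement, define $F:=s(C)\subset TM$. Since $C$ is a subalgebroid and $s$ is a Lie algebroid morphism, $F$ is an integrable distribution on $M$ (it is a subbundle because, by the condition $\ker(s)\cap C=\{\phi:\phi(E)\subset B\}$, the rank of $\ker(s)\cap C$ is constant, so the rank of $s(C)$ is constant). For the connection: given $X\in\Gamma(F)$, pick any $\tilde X\in\Gamma(C)$ with $s(\tilde X)=X$; since the action of $\tilde X$ on $\Gamma(E)$ preserves $\Gamma(B)$, it descends to a CDO on $E/B$ with symbol $X$, and we set $\nabla_X:=$ this induced operator on $\Gamma(E/B)$. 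I must check this is well-defined: two lifts $\tilde X,\tilde X'$ differ by an element of $\ker(s)\cap C$, which by hypothesis consists of endomorphisms mapping $E$ into $B$, hence acts as zero on $E/B$. Then $\nabla$ is $C^\infty(M)$-linear in $X$ (the Leibniz defect of a CDO is governed by its symbol, which we have just shown is the only ambiguity), so it is a genuine $F$-connection on $E/B$, and flatness $\nabla_{[X,Y]}=[\nabla_X,\nabla_Y]$ follows because $C$ is closed under bracket and the bracket of lifts is a lift of $[X,Y]$.

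Conversely, given $(F,\nabla)$, define $C$ to be the set of $X_0\in CDO(E)$ whose symbol $\un{X_0}$ lies in $F$ and which preserve $\Gamma(B)$ and whose induced operator on $\Gamma(E/B)$ agrees with $\nabla_{\un{X_0}}$. One checks $C$ is a $C^\infty(M)$-submodule of $CDO(E)$ closed under bracket: the bracket of two such operators has symbol $[\un{X_0},\un{Y_0}]\in\Gamma(F)$ since $F$ is integrable, preserves $\Gamma(B)$, and induces $[\nabla_{\un{X_0}},\nabla_{\un{Y_0}}]=\nabla_{[\un{X_0},\un{Y_0}]}$ by flatness. That $C$ is the sheaf of sections of a subbundle follows by exhibiting local generators: over a chart, choose a frame of $E$ adapted to $B$, lift a local frame $\{Y_a\}$ of $F$ to CDOs realizing $\nabla_{Y_a}$ on $E/B$ and preserving $B$ (possible by choosing a complement to $B$ and using the connection), and add all of $\{\phi\in End(E):\phi(E)\subset B\}$; these generate $C$ freely over $C^\infty(M)$ locally. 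Finally, by construction $\ker(s)\cap C=\{\phi:\phi(E)\subset B\}$ and the sections of $C$ preserve $\Gamma(B)$.

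It remains to verify the two constructions are inverse. Starting from $C$, forming $(F,\nabla)=(s(C),\nabla)$ and then rebuilding $C'$: clearly $C\subset C'$ since elements of $C$ have symbol in $F$, preserve $\Gamma(B)$, and induce $\nabla$ on $E/B$ by definition of $\nabla$; conversely an element of $C'$ has symbol $X\in\Gamma(F)$, so differs from a chosen lift in $C$ by an element of $\ker(s)$ preserving $\Gamma(B)$ with trivial induced action on $E/B$, i.e. by an element of $\{\phi:\phi(E)\subset B\}\subset C$, so lies in $C$. Starting from $(F,\nabla)$, building $C$ and then $(s(C),\nabla')$: surjectivity of $s(C)\to F$ onto $F$ is built into the local-generator construction above (the lifts of the frame $\{Y_a\}$ are in $C$), and $\nabla'=\nabla$ because the lifts were chosen to realize $\nabla$.

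The main obstacle I expect is the two subbundle/constant-rank arguments: showing that $s(C)$ has constant rank in the forward direction (handled by the constant-rank hypothesis on $\ker(s)\cap C$), and especially showing in the reverse direction that the $C^\infty(M)$-module $C$ defined by the three pointwise-looking conditions is actually locally free of the expected rank $\dim F+\operatorname{rk}(B)\cdot\operatorname{rk}(E)$. The clean way to do the latter is the explicit local construction: fix a splitting $E|_U\cong B|_U\oplus (E/B)|_U$ and a local frame $Y_1,\dots,Y_r$ of $F$; the operator $Y_a^{\mathrm{lift}}$ acting as $\nabla_{Y_a}$ on the second summand, as $0$ on the first, with symbol $Y_a$, lies in $C$, and together with the constant-rank bundle $\{\phi:\phi(E)\subset B\}$ these furnish $C^\infty(U)$-linearly independent generators whose span is exactly $C|_U$; this simultaneously proves $C$ is a subbundle, computes its intersection with $\ker(s)$, and shows $s(C)=F$.
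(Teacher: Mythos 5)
Your proof is correct and follows essentially the same route as the paper: in the forward direction the symbol gives $F$ and the induced quotient operator gives $\nabla$ (with well-definedness from the hypothesis on $\ker(s)\cap C$ and flatness from closure under bracket), and in the converse direction $C$ is characterized as the CDOs with symbol in $F$ compatible with $\nabla$, shown to be a subbundle by the same adapted-frame/local-lift construction. You spell out the mutual-inverse verification more explicitly than the paper does, but the content is identical.
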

\begin{proof}
Let $C$ be as above. Then $F:=s(C)$ has constant rank, and is involutive since $C$ is a Lie subalgebroid. 
Define  
$$\nabla_{\un{X_0}}(e \text{ mod } B)=X_0(e) \text{ mod } B$$ on $E/B$, where  $X_0\in \Gamma(C)$. The above properties of $C$ make clear that $\nabla$ is well-defined, and further it is an $F$-connection.
 The flatness of $\nabla$
follows from
$$\nabla_{[\un{X_0},\un{X_0'}]}(e \text{ mod } B) =[X_0,X_0'](e) \text{ mod } B =[\nabla_{\un{X_0}},\nabla _{\un{X_0'}}](e \text{ mod } B)$$ where in the first equality we used that $C$ is a Lie subalgebroid of $\mathfrak{D}(E)$.

Conversely, given a pair $(F,B)$ as above,  
\begin{equation}\label{Cas}
 \{X_0\in \mathfrak{D}(E): \un{X_0}\in \Gamma(F),\; X_0(e)\in \Gamma(B) \text{ whenever } \nabla(e \text{ mod } B)=0 \}
\end{equation}
is the space of sections of a subbundle $C$ of $\mathfrak{D}(E)$, which fits in the short exact sequence of vector bundles
 \begin{equation*} 
0\to \{\phi\in End(E): \phi(E)\subset B\}\to C \overset{s}\to F \to 0.
\end{equation*}
To see this, notice that for any $Y\in \Gamma(F)$ there exists  $X_0$  lying in \eqref{Cas} with $\un{X_0}=Y$: choose locally a frame $\{e_i\}$ of $E$ consisting of a local frame for $B$ and lifts of $\nabla$-flat local sections of $E/B$, then just define $X_0(e_i)=0$ for all $i$, and obtain a covariant differential operator $X_0$   by imposing   eq. \eqref{eqCDO}. Hence $C$ is a subbundle of $\mathfrak{D}(E)$, and
one checks easily that it satisfies the required properties.
 \end{proof}

Let us introduce a piece of terminology: we say that an involutive distribution $F$ on a manifold $M$ is  \emph{simple} if there exists a smooth structure on $M/F$ for which the projection $\pi \colon M \to M/F$ is a submersion.

Consider $C(\cM)^{\cD}$, the sheaf (over the body $M$) of  $\cD$-invariant functions on $\cM$, defined assigning to $U\subset M$ the algebra   $$C(\cM)^{\cD}_U:=\{f\in C( \cM)_U:X(f)=0 \text{ for all }X\in \cD_U\}.$$
Assume that  $F$ is simple.
Then
\begin{equation}\label{sheafMF}
V\mapsto C(\cM)^{\cD}_{\pi^{-1}(V)}
\end{equation}
 is a presheaf  of graded commutative algebras over $M/F$. When it    defines an N-manifold (with body $M/F$), we say that  the quotient of $\cM$ by $\cD$ is \emph{smooth}, and denote the quotient N-manifold by $\cM/\cD$.

\begin{prop}\label{distriE} Let $E\to M$ be a vector bundle.
Let $\cD$ be an involutive distribution on $\cM=E[1]$.
Denote by $(B,F,\nabla)$ the data associated to $\cD$ as in Lemmas \ref{distrCDO} and \ref{BF}.
 
If $F$ is simple and 
$\nabla$ has no holonomy, then $\cM/\cD$ is an N-manifold, and  $$\cM/\cD\cong \tilde{E}[1]$$ as N-manifolds, where $\tilde{E}\to M/F$ is the vector bundle   obtained quotienting $E/B \to M$ by the action of the flat $F$-connection $\nabla$ by parallel transport. 
\end{prop}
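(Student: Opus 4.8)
The plan is to construct an explicit isomorphism of sheaves of graded commutative algebras between the presheaf \eqref{sheafMF} on $M/F$ and the sheaf of functions on $\tilde E[1]$, and then to observe that this isomorphism respects the N-manifold local models. Since $C(\cM)$ is generated in degrees $0$ and $1$ by $C^\infty(M)$ and $\Gamma(E^*)$ (Lemma \ref{N1}), it suffices to identify the $\cD$-invariant functions in these two degrees. First I would treat degree $0$: a function $f\in C^\infty(M)$ is $\cD$-invariant iff $X(f)=0$ for all $X\in\cD_0$, and since $X(f)=\un X(f)$ depends only on the symbol, this says exactly $\un X(f)=0$ for all $\un X\in\Gamma(F)$, i.e. $f$ is basic for the (simple) foliation $F$. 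Hence $C_0(\cM)^{\cD}_{\pi^{-1}(V)}=C^\infty(V)$, the functions on $M/F$.

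Next comes degree $1$. An element $\xi\in\Gamma(E^*)$ (over $\pi^{-1}(V)$) is $\cD$-invariant iff it is killed by $\cD_{-1}=\Gamma(B)$ and by $\cD_0=\Gamma(C)$. The first condition says $\xi$ vanishes on $B$, i.e. $\xi$ is a section of $(E/B)^*$. Given that, the second condition, using the dual-CDO formula \eqref{dualCDO} and the definition of $\nabla$ in Lemma \ref{BF}, reads $\un{X_0}(\langle\xi,\bar e\rangle)=\langle\xi,\nabla_{\un{X_0}}\bar e\rangle$ for all $\bar e\in\Gamma(E/B)$ and all $\un{X_0}\in\Gamma(F)$; that is precisely the statement that $\xi$ is a $\nabla^*$-flat section of $(E/B)^*$. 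Because $\nabla$ has no holonomy and $F$ is simple, parallel transport identifies $(E/B)^*$ with the pullback $\pi^*\tilde E^*$ where $\tilde E=(E/B)/\nabla$; so $\nabla^*$-flat sections over $\pi^{-1}(V)$ are exactly $\Gamma(\tilde E^*|_V)=C_1(\tilde E[1])|_V$. Therefore the generators of $C(\cM)^{\cD}$ over $\pi^{-1}(V)$ match the generators $C^\infty(V)$ and $\Gamma(\tilde E^*|_V)$ of $C(\tilde E[1])$ over $V$.

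It then remains to check that there are no further relations, so that the presheaf \eqref{sheafMF} is genuinely (the sheaf of functions of) the N-manifold $\tilde E[1]$ and not a proper subalgebra. For this I would argue locally: pick $m\in M$, choose a foliated chart for $F$ and a local frame $\{e_i\}$ of $E$ as in the proof of Lemma \ref{BF} — a frame for $B$ together with lifts of $\nabla$-flat local sections of $E/B$ — and observe that the $\nabla^*$-flat local sections of $(E/B)^*$ dual to the flat frame, together with the $F$-basic coordinates on $M$, form exactly a local coordinate system for $\tilde E[1]$ over the image chart in $M/F$. Since $C(\cM)^{\cD}$ is by construction the subalgebra of $C(\cM)$ killed by $\cD$ and these elements are $C(\cM)^{\cD}$-algebraically independent (they already were so in $C(\cM)$), the presheaf \eqref{sheafMF} is locally isomorphic to the standard local model $C^\infty(U')\otimes S^\bullet(\tilde E^*_m)$, hence defines an N-manifold, and the isomorphism $\cM/\cD\cong\tilde E[1]$ follows.

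The main obstacle I anticipate is not the algebra but making the global/sheaf-theoretic bookkeeping precise: showing that the presheaf \eqref{sheafMF} is actually a sheaf and that the degree-$1$ invariants genuinely assemble into the sections of a \emph{vector bundle} $\tilde E^*$ over $M/F$ rather than merely a locally free sheaf with possibly bad transition behavior. This is exactly where the hypotheses ``$F$ simple'' and ``$\nabla$ has no holonomy'' are used — the first to get a manifold structure on $M/F$ with $\pi$ a submersion, the second so that parallel transport along $F$-leaves descends $E/B$ to a well-defined bundle $\tilde E$ on the quotient — and I would spell out the descent argument for $\tilde E$ carefully (choosing a complete transversal, identifying leaves' holonomy, etc.) since that is the genuine geometric content; the identification of invariant functions is then a formal consequence of the derived-bracket formulas already recorded.
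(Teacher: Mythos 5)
Your proposal is correct and follows essentially the same route as the paper: compute the $\cD$-invariants degreewise, identifying $C_0(\cM)^{\cD}$ with $F$-basic functions and $C_1(\cM)^{\cD}$ with the $\nabla^*$-flat sections of $B^{\circ}=(E/B)^*$, then use simplicity of $F$ and triviality of the holonomy of $\nabla$ to descend to $\tilde{E}[1]$. The only divergence is that where the paper delegates the verification that the presheaf \eqref{sheafMF} is a sheaf generated in degrees $0$ and $1$ to an external technical lemma (\cite[Lemma 5.12]{CZ2}), you sketch a direct local argument with foliated charts and flat frames, which is an acceptable substitute for the same step.
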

\begin{proof}
 We compute $C(\cM)^{\cD}$. In degree zero we simply have
  $C_0(\cM)^{\cD}=C^{\infty}(M)^F$.
In degree
$1$ we have that the subset of $C_1(\cM)$ annihilated by  $\cD_{-1}$
is exactly $\Gamma(B^{\circ})$.
Now consider the $F$-connection $\nabla^*$ on
$B^{\circ}=(E/B)^*$ dual to $\nabla$, defined by
$$\langle \nabla^* \xi, V \rangle=-\langle  \xi, \nabla V \rangle+d\langle \xi,V \rangle$$
for sections $\xi$ of  $B^{\circ}$ and $V$ of $E/B$.
It is given by
 $\nabla^*_{\un{X_0}} \xi= X_0(\xi)$ where $X_0\in \cD_0$ and where we identify $\Gamma(E^*)=C_1(\cM)$ as in Lemma \ref{N1}.
 Hence
  we conclude that
 \begin{align*}
   C_1(\cM)^{\cD} =& \{\xi \in \Gamma(B^{\circ}): \nabla^*
   \xi =0 \} \\
    =& \{\xi \in \Gamma(B^{\circ}): \langle \xi,
    \Gamma_{\nabla}(E/B) \rangle \subset  C^{\infty}(M)^F\}
 \end{align*}
where $\Gamma_{\nabla}(E/B)$ denotes the space of $\nabla$-parallel
 sections of $E/B$.

By assumption, $F$ is simple and 
 $\nabla$ has no
 holonomy. On one hand, this assures that the quotient  of $E/B$ by the   action of $\nabla$ is a smooth  vector bundle.  On the other hand this implies that
 the technical conditions i) and ii) of  \cite[Lemma 5.12]{CZ2} are satisfied and hence eq. \eqref{sheafMF} gives a \emph{sheaf} of graded commutative algebras  generated by its elements in degrees $0$ and $1$.
Since by the above $C_0(\cM)\cong C^{\infty}(M/F)$ and  $C_1(\cM)^{\cD}\cong \Gamma(\tilde{E}^*)$, this implies that the sheaf given by eq. \eqref{sheafMF} corresponds to the N-manifold  $\tilde{E}[1]$. Hence
$\cM/\cD\cong \tilde{E}[1]$ as N-manifolds.
\end{proof}

 \subsection{Distributions on  NQ-1 manifolds}\label{sec:disNQ-1} 

In this subsection  $A$ is a Lie algebroid over $M$ and $\cM:=A[1]$
the corresponding NQ-1 manifold (see Lemma \ref{nq1la}), whose homological vector field we denote by $Q$. 

Let $A\to M$ be a Lie algebroid. We recall the definition of ideal system on $A$ \cite[Def. 4.4.2]{MK2}.

\begin{defi} \label{is}
An \emph{ideal system} for the Lie algebroid $A \to M$ consists of
\begin{itemize}
 \item a Lie subalgebroid $B\to M$ of $A$,
\item a closed, embedded wide\footnote{This means that the subgroupoid has the same base
$M$.} subgroupoid  of the pair groupoid $M\times M$ of the form
$R=\{(x,x'):\pi(x)=\pi(x')\}$ for some surjective submersion $\pi
\colon M \to N$,
\item a linear action $\Theta$ of $R$ on the vector bundle  $A/B\to M$,
\end{itemize}
such that, referring to a section $a\in \Gamma(A)$ as  $\Theta$-stable whenever $\Theta(a)\in \Gamma(B)$,
\begin{itemize}
 \item[(i)] if  $a,a' \in \Gamma(A)$ are $\Theta$-stable then $[a,a']_A$  is also $\Theta$-stable,
 \item[(ii)] if  $b\in \Gamma(B)$, and $a\in \Gamma(A)$ is $\Theta$-stable, then $[b,a]_A\in \Gamma(B)$,
 \item[(iii)] the anchor $\rho_A$ maps $B$ into $F:=ker(\pi_*)$,
 \item[(iv)] the map $A/B \to TM/F$ induced by the anchor $\rho_A$ is $R$-equivariant w.r.t. the action $\Theta$ of $R$ on $A/B$ and the canonical action of $R$ on $TM/F$.
\end{itemize}
\end{defi}

When $A$ is a Lie algebra, an ideal system is simply an ideal of $A$.

An ideal system always  induces a Lie algebroid structure on the quotient of $A/B$ by the action $\Theta$ (a vector bundle over $N$)  such that the natural projection is a Lie algebroid morphism   \cite[Thm. 4.4.3]{MK2}.\\

The next Proposition \ref{distri} shows that, under certain conditions, an involutive distribution on $A[1]$ preserved by the homological vector field $Q$
gives an ideal system on $A$. Let us first understand this in the case of a
Lie algebra $A=\g$.
\begin{ep}\label{gD}[Distributions on Lie algebras]
Let $A=\g$ be a Lie algebra. The degree $-1$ part of  a distribution $\cD$ on $\g[1]$
corresponds to a subspace $B\subset \g$. In suitable local
coordinates on $\g[1]$, we have
 \[\cD_{-1}=span_{\R}\left\{\pd{\xi_{\alpha}}
\right\}_{\alpha\le \dim B}, \quad \cD_{0}=span_\R \left\{ \xi_{\alpha'}
\pd{\xi_\alpha}
\right\}_{\alpha\le \dim B, \alpha'\le \dim \g }. \]
This can be seen directly from the definition of distribution or from Lemma \ref{distrCDO}.
  The distribution $\cD$ is automatically
involutive.

Further, if $[Q,\cdot]$ preserves $\cD$, then $B$ is an
ideal in $\g$. Indeed, for any index $\gamma\le dim(B)$ we have $[Q,\pd{\xi_\gamma}]\in \cD_0$. Hence for any $\beta \le dim(\g)$, using the identification $\g \cong \chi_{-1}(\g[1])$, we have
\[
\left[\pd{\xi_\gamma}, \pd{\xi_\beta} \right]_\g= \left[\left[Q,\pd{\xi_\gamma}\right],
\pd{\xi_\beta} \right]= \left[\sum_{ \alpha=1}^{\dim B}
\sum_{ \alpha'=1}^{\dim \g}
c_{\alpha' \alpha}  \xi_{\alpha'}
  \pd{\xi_{\alpha}}, \pd{\xi_\beta} \right] = -\sum_{ \alpha=1}^{\dim B}
  c_{\beta \alpha}
  \pd{\xi_\alpha} \in B
\]
for some constants $c_{\alpha' \alpha}$.
\end{ep}

\begin{prop}\label{distri} Let $A\to M$ be a Lie algebroid. Let $\cD$ be an involutive distribution on $\cM:=A[1]$, and
assume  that  $[Q,\cD]\subset
\cD$, where $Q$  is the homological vector field on $\cM$ as in Lemma \ref{nq1la}. 
Assume that $F$ is simple and 
$\nabla$ has no holonomy, where $(B,F,\nabla)$ is the data associated to $\cD$ as in Lemmas \ref{distrCDO} and \ref{BF}.

Then the following is an ideal system for the Lie
algebroid $A$:
\begin{itemize}
\item the  Lie subalgebroid $B$ of $A$,
\item the Lie subgroupoid $R$ of $M\times M$ associated to the submersion $\pi \colon M \to M/F$,
\item the linear action of $R$ on the vector bundle $A/B$ given by parallel translation w.r.t. $\nabla$.
\end{itemize}

\end{prop}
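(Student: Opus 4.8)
The plan is to verify the four axioms of Definition~\ref{is} directly, translating the hypotheses via the dictionary established in Lemmas~\ref{distrCDO} and~\ref{BF} together with the derived-bracket formula \eqref{eq:br-rho}. The key observation to set up first is that, since $\cD$ is generated as a $C(\cM)$-module by $\cD_{-1}$ and $\cD_0$, the hypothesis $[Q,\cD]\subset\cD$ is equivalent to the two containments $[Q,\cD_{-1}]\subset\cD_0$ and $[Q,\cD_0]\subset\cD_{-1}\oplus\cD_0$ (degree reasons: $[Q,\cdot]$ raises degree by $1$, so $[Q,\cD_{-1}]$ lands in degree $0$ and $[Q,\cD_0]$ in degree $1$, but there is no $\cD_1$, so in fact $[Q,\cD_0]\subset C_1(\cM)\cD_{-1}$). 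Concretely, identifying $\chi_{-1}(\cM)=\Gamma(A)$, the first containment says: for $b\in\Gamma(B)$, the operator $[Q,b]\in CDO(A^*)$, equivalently its dual CDO on $A$ (which by Lemma~\ref{N1} is $[[Q,b],\cdot]=[b,\cdot]_A$), lies in $\Gamma(C)$. I would record this as a lemma-internal claim: \emph{$[Q,\cD]\subset\cD$ implies $[b,\cdot]_A\in\Gamma(C)$ for every $b\in\Gamma(B)$, and $[Q,\cD_0]\subset C_1(\cM)\cD_{-1}$.}

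With that in hand, the axioms fall out as follows. For (iii): $\rho_A(b)=[[Q,b],\cdot]$ on $C^\infty(M)$ is the symbol $\un{[Q,b]}$, which by the previous claim is a symbol of a section of $C$, hence lies in $\Gamma(F)=\Gamma(s(C))$; so $\rho_A(B)\subset F$. For the $\Theta$-stability axioms (i) and (ii): by Lemma~\ref{BF} the connection $\nabla$ on $A/B$ is $\nabla_{\un{X_0}}(e\bmod B)=X_0(e)\bmod B$ for $X_0\in\Gamma(C)$, so under the correspondence $\Gamma(C)=\cD_0$ a section $a\in\Gamma(A)$ is $\Theta$-stable (i.e.\ $\nabla$-parallel mod $B$ along $F$) exactly when $X_0(a)\in\Gamma(B)$ for all $X_0\in\cD_0$ with the appropriate symbol, which one checks is the condition that $[\cD_0,a]\subset\cD_{-1}$ — this is the graded geometric avatar of $\Theta$-stability. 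Then $[Q,\cD]\subset\cD$ gives, for $\Theta$-stable $a,a'$, that $[[Q,a],a']_A$ computed via the derived bracket is again annihilated mod $B$ by $\cD_0$; the bracket identities in $\chi(\cM)$ (graded Jacobi plus $[Q,Q]=0$) push this through, yielding (i). Axiom (ii) is similar but uses $b\in\Gamma(B)=\cD_{-1}$ and $a$ $\Theta$-stable, combined with involutivity $[\cD_0,\cD_{-1}]\subset\cD_{-1}$ already recorded in Lemma~\ref{distrCDO}. Axiom (iv), the $R$-equivariance of $A/B\to TM/F$ induced by $\rho_A$, amounts to the infinitesimal statement that the symbol map intertwines $\nabla$ on $A/B$ with the Bott connection on $TM/F$, which again follows by applying the symbol to the defining formula for $\nabla$ and using that $s\colon\mathfrak{D}(E)\to TM$ is a Lie algebroid morphism, so symbols of brackets are brackets of symbols.

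Finally, the hypotheses that $F$ is simple and $\nabla$ has no holonomy are needed so that $R$ (the submersion groupoid of $\pi\colon M\to M/F$) exists as a closed embedded wide subgroupoid of $M\times M$ and so that parallel transport along $\nabla$ is well-defined and descends to a genuine linear $R$-action $\Theta$ on $A/B$ (not merely a local one); this is exactly what Proposition~\ref{distriE} already extracted, so I would invoke it rather than re-prove it. The main obstacle I anticipate is the bookkeeping in axioms (i) and (ii): one must carefully unwind what ``$\Theta$-stable'' means as a condition on vector fields on $\cM$ (it is a pointwise/flat-section condition mod $B$, not simply membership in $\cD$), then track how $[Q,-]$ interacts with it through the graded Jacobi identity. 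The cleanest route is probably to phrase $\Theta$-stability of $a$ as ``$[X_0,a]\in\Gamma(B)$ for all $X_0\in\Gamma(C)$'' — equivalently $a\bmod B$ is $\nabla$-flat — and then compute $[X_0,[[Q,a],a']]$ using Jacobi, substituting $[Q,\cD]\subset\cD$, $[\cD_0,\cD_0]\subset\cD_0$, and $[\cD_0,\cD_{-1}]\subset\cD_{-1}$ at each step; the $Q$-invariance is what lets the $X_0$ be commuted past the $[Q,-]$ at the cost of a term that is again in $\cD$.
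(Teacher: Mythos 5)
Your overall strategy --- verifying axioms (i)--(iv) of Def.~\ref{is} via the derived bracket and the translation ``$a$ is $\Theta$-stable iff $[\cD_0,a]\subset\Gamma(B)$'' --- is the same as the paper's, and your treatments of (ii) and (iii) are essentially correct. However, there is a genuine error in your degree bookkeeping that undermines the two hard verifications, (i) and (iv). You assert that ``there is no $\cD_1$, so in fact $[Q,\cD_0]\subset C_1(\cM)\cD_{-1}$''. Both halves are wrong: $\cD$ is a graded $C(\cM)$-submodule, so $\cD_1=C_1(\cM)\cdot\cD_0$ is a nonzero space of degree-$1$ vector fields, whereas $C_1(\cM)\cD_{-1}$ sits inside $\cD_0$, in degree $0$, where $[Q,\cD_0]$ cannot land. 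The correct consequence of $Q$-invariance is $[Q,\cD_0]\subset\cD_1=C_1(\cM)\cdot\cD_0$, and this containment is precisely the engine of the proof: for $X_0\in\cD_0$ and $\Theta$-stable $a$ one writes $[X_0,[Q,a]]=[[X_0,Q],a]+[Q,[X_0,a]]$; the second term lies in $[Q,\cD_{-1}]\subset\cD_0$, and the first lies in $\cD_0$ because $[X_0,Q]\in C_1(\cM)\cD_0$ and, by the Leibniz rule, $[\xi Y_0,a]$ is a combination of $\xi[Y_0,a]\in C_1(\cM)\Gamma(B)\subset\cD_0$ and $a(\xi)\,Y_0\in C^\infty(M)\cD_0$. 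Taking symbols then yields $[\un{X_0},\rho_A(a)]\in s(\cD_0)=\Gamma(F)$, which is (iv); and the same containment $[[X_0,Q],a]\in\cD_0$, fed into one further Jacobi identity together with (ii), gives (i).

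Your proposed shortcut for (iv) --- apply the symbol to the defining formula for $\nabla$ and use that symbols of brackets are brackets of symbols --- does not work: for a $\Theta$-stable $a$ the operator $[Q,a]=\mathrm{ad}_a\in CDO(A)$ is in general \emph{not} a section of $C$, so there is no bracket inside $\Gamma(C)$ whose symbol you could take, and the Jacobi manipulation above is unavoidable. Likewise ``graded Jacobi plus $[Q,Q]=0$ push this through'' for (i) is a placeholder for exactly the computation your degree error prevents you from setting up. Two smaller points: the first item of the ideal system requires $B$ to be a Lie \emph{sub}algebroid of $A$, i.e.\ $[\Gamma(B),\Gamma(B)]_A\subset\Gamma(B)$, which is not among (i)--(iv) and which you never verify (it follows from $[b,b']_A=[[Q,b],b']$ with $[Q,b]\in\cD_0$ and $[\cD_0,\cD_{-1}]\subset\cD_{-1}$); and in (ii) the ingredient is not involutivity but the $\Theta$-stability of $a$ applied to $[Q,b]\in\cD_0$. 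Your appeal to the simplicity of $F$ and the vanishing holonomy of $\nabla$ for the existence of $R$ and of the action $\Theta$ is fine and matches the paper.
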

\begin{proof}
 $B$ is a Lie subalgebroid of $A$: if
$b,b'\in \Gamma(B)$ then
$$[b,b']_A=
[[Q,b],b']\in \cD_{-1}=\Gamma(B)$$ using
$[Q,\cD]\subset \cD$
and $[\cD_0,\cD_{-1}]\subset \cD_{-1}$.
If $F$ is simple then $R$ is a  closed, embedded wide subgroupoid  of $M\times M$.
If $\nabla$ has no holonomy then the groupoid action of $R$ on $A/B$ is well-defined.

To check that we indeed have an ideal system we need to check  (i)-(iv) in Def. \ref{is}. Recall that $F$ and $\nabla$ were defined in the proof of Lemma \ref{BF}.
\begin{itemize}
\item [(ii)] \emph{If  $b\in \Gamma(B)$, and $a\in \Gamma(A)$  satisfies
$[\cD_0,a]\in \Gamma(B)$, then $[b,a]_A\in \Gamma(B)$}

Indeed $[[Q,b],a]\in  \Gamma(B)$ because $Q$ preserves
$\cD$.

\item
[(iii)] \emph{If $b \in \Gamma(B)$  then $\rho_A(b)=\un{[Q,b]}\in
\Gamma(F)$.}

This is clear because $Q$ preserves $\cD$.

\item[(iv)] \emph{If $p,q$ are points of $M$ lying in the same fiber of $\pi$ and
$a_p\in A_p$, then $\pi_*(\rho_A(a_p))=\pi_*(\rho_A(a_q))$, where $a_q\in A_q$ is a lift of the $\nabla$-parallel translation of $(a_p  \text{ mod } B)$ from $p$ to $q$.}

It is enough to show that
if $a \in \Gamma(A)$ satisfies $[\cD_0,a]\in \Gamma(B)$
then $\rho_A(a)=\un{[Q,a]}$ descends to a vector field on $M/F$, i.e.
$[\un{\cD_0},\un{[Q,a]}]\subset \Gamma(F)$. To show this we proceed as follows. For any $X_0\in \cD_0$ consider the r.h.s. of
\begin{equation*}
[{X_0},[Q,a]]=[[X_0,Q],a]+[Q,[X_0,a]].
\end{equation*}
$[X_0,Q]\in
\cD_1=C_1(\cM)\cdot \cD_0$, so using the assumption on
$a$ we get
 $[[X_0,Q],a]\in \cD_0$. The second term on the r.h.s. also lies in
 $\cD_0$, since $[X_0,a]\in \Gamma(B)$ by assumption and $[Q,\cD]\subset \cD$.
Hence $[\un{X_0},\un{[Q,a]}]=\un{[{X_0},{[Q,a]}]}\in s(\cD_0)=\Gamma(F).$

\item[(i)] \emph{If  $a,a'\in \chi_{-1}(\cM)=\Gamma(A)$ satisfy $[\cD_0,a]\in
\Gamma(B)$, $[\cD_0,a']\in \Gamma(B)$ then
$[\cD_0,[a,a']_A]\in \Gamma(B)$.}

Let $X_0\in \cD_0$. Using repeatedly the Jacobi
identity we have
\begin{align*}
\left[X_0,[[Q,a],a']\right]=&\left[[X_0,[Q,a]],a'\right]+\left[[Q,a],[X_0,a']\right]\\=&
\left[[[X_0,Q],a],a'\right]+\left[[X_0,a],a'\right]_A+\left[a,[X_0,a']\right]_A.
\end{align*}
Since $[X_0,a],[X_0,a']\in \Gamma(B)$ by assumption,    (ii) implies
that the second and third term on the r.h.s. lie in $\Gamma(B)$. The
computation in (iv) shows that  $[[X_0,Q],a]\in \cD_0$, so by
the assumption on $a'$ the first term on the r.h.s. also lies in
$\Gamma(B)$.
\end{itemize}
\end{proof}

We show that  the quotient   of $\cM=A[1]$  by an involutive distribution $\cD$ preserved by $Q$ agrees with the
natural  quotient of
the Lie algebroid $A$ by the corresponding ideal system (\cite[Thm. 4.4.3]{MK2}).

\begin{prop}\label{quot}
Consider the set-up of Prop. \ref{distri}. Then  $\cM/\cD$ is an  NQ-1 manifold and $$\cM/\cD\cong \tilde{A}[1]$$ as NQ-1 manifolds, where $\tilde{A}\to M/F$ is the Lie algebroid   obtained
as the quotient of $A$ by the  ideal system defined in Prop. \ref{distri}.
\end{prop}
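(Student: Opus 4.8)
The plan is to combine the two reductions we have already performed: the reduction of the underlying N-manifold $\cM = A[1]$ to $\tilde{E}[1]$ in Proposition \ref{distriE} (applied with $E = A$), and the reduction of the Lie algebroid $A$ by the ideal system of Proposition \ref{distri} in Mackenzie's Theorem \cite[Thm. 4.4.3]{MK2}. Since an NQ-1 manifold is the same data as a Lie algebroid (Lemma \ref{nq1la}), once I know that $\cM/\cD$ is an N-manifold it only remains to identify the homological vector field. So the first step is to invoke Proposition \ref{distriE}: because $F$ is simple and $\nabla$ has no holonomy, $\cM/\cD$ is an N-manifold, isomorphic to $\tilde{A}[1]$ as an N-manifold, where $\tilde{A}\to M/F$ is the quotient vector bundle of $A/B$ by the flat-connection action. (Here I use that in the present set-up the vector bundle $\tilde E$ produced by Prop.~\ref{distriE} is exactly the underlying vector bundle of Mackenzie's quotient Lie algebroid $\tilde A$.)

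The second step is to produce the homological vector field on $\cM/\cD$. By construction $C(\cM/\cD)$ is the sheaf of $\cD$-invariant functions $C(\cM)^{\cD}$ (pushed to $M/F$), so it is a subalgebra of $C(\cM)$. Since $[Q,\cD]\subset\cD$, the derivation $Q$ preserves the $\cD$-invariant functions: if $X(f)=0$ for all $X\in\cD$ then $X(Qf) = \pm Q(Xf) + [X,Q](f) = 0$ because $[X,Q]\in\cD$ as well. Hence $Q$ restricts to a degree $1$ derivation $\tilde Q$ of $C(\cM/\cD)$, and $[\tilde Q,\tilde Q]=0$ is inherited from $[Q,Q]=0$. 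Thus $(\cM/\cD, \tilde Q)$ is an NQ-1 manifold, and under the N-manifold isomorphism $\cM/\cD\cong\tilde A[1]$ it corresponds to \emph{some} Lie algebroid structure on $\tilde A$.

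The third step is to check that this Lie algebroid structure on $\tilde A$ coincides with the one given by Mackenzie's construction. I would do this by comparing brackets and anchors through the derived-bracket formulas \eqref{eq:br-rho}. A section of $\tilde A$ is, by the identification $\chi_{-1}=\Gamma(\cdot)$, represented by a $\cD$-invariant degree $-1$ vector field on $\cM$; concretely it comes from a section $a\in\Gamma(A)$ that is $\Theta$-stable, i.e. satisfies $[\cD_0,a]\subset\Gamma(B)$, taken modulo $\Gamma(B)$ and modulo parallel transport. The computations already carried out in the proof of Prop.~\ref{distri} — items (i)--(iv) — show precisely that the derived bracket $[[Q,a],a']$ and the derived anchor $[[Q,a],f]$ of two such sections are again $\Theta$-stable and descend to $M/F$; so $\tilde Q$ induces on $\tilde A$ the bracket $[\bar a,\bar a']_{\tilde A} = \overline{[a,a']_A}$ and anchor $\overline{\rho_A(a)}$, which is exactly Mackenzie's quotient bracket and anchor. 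Hence $\cM/\cD\cong\tilde A[1]$ as NQ-1 manifolds.

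The main obstacle is the bookkeeping in the third step: one must be careful that a class in $\Gamma(\tilde A)$ really is represented by a $\Theta$-stable section, that the derived-bracket output is independent of the choice of representative (this is where involutivity, $[Q,\cD]\subset\cD$, and the holonomy-free hypothesis on $\nabla$ all re-enter), and that the descended bracket matches Mackenzie's definition verbatim rather than merely up to isomorphism. All the hard identities, however, are already contained in the proof of Proposition \ref{distri}, so this last step is essentially an assembly of facts in hand rather than new work.
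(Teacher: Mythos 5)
Your proposal is correct, and its first two steps coincide with the paper's proof: invoke Prop.~\ref{distriE} to get the N-manifold isomorphism $\cM/\cD\cong\tilde{A}[1]$, then observe that $[Q,\cD]\subset\cD$ forces $Q$ to preserve $C(\cM)^{\cD}$ (via exactly the computation $X(Qf)=\pm Q(Xf)+[X,Q](f)=0$), yielding a homological vector field $\tilde{Q}$ on the quotient. Where you diverge is the final identification with Mackenzie's quotient. You propose to unwind the derived brackets \eqref{eq:br-rho} explicitly: represent sections of $\tilde{A}$ by $\Theta$-stable sections of $A$, and re-run the computations (i)--(iv) of Prop.~\ref{distri} to check that the descended bracket and anchor match Mackenzie's formulas term by term. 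The paper avoids this bookkeeping entirely with a uniqueness argument: since the inclusion $C(\cM)^{\cD}\hookrightarrow C(\cM)$ intertwines $\tilde{Q}$ with $Q$ by construction, the projection $A\to\tilde{A}$ is automatically a Lie algebroid morphism for the structure induced by $\tilde{Q}$; and the quotient Lie algebroid structure for which the projection is a morphism is unique, so it must agree with the one produced by the ideal system. Your route is viable (the well-definedness worries you flag are indeed settled by the hypotheses of Prop.~\ref{distri}), but the paper's observation that ``projection is a morphism'' characterizes the quotient structure saves you the entire third step; you may want to adopt it, or at least note that your explicit computation is only needed locally, since not every section of $\tilde{A}$ need lift globally to a $\Theta$-stable section of $A$.
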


\begin{proof} 
We have $\cM/\cD\cong \tilde{A}[1]$ as N-manifolds
by Prop. \ref{distriE}.

The assumption $[Q,\cD]\subset
\cD$ implies that $Q$ preserves $C(\cM)^{\cD}$: if $f$ is $\cD$-invariant, then $Q(f)$ is also $\cD$-invariant, because for all $X\in \cD$ we have
$$X(Q(f))=\pm Q(X(f))+[X,Q](f)=0.$$ Hence, by restricting the action of $Q$ to
$C(\cM)^{\cD}\cong  C(\tilde{A}[1])$, we obtain a homological vector field $\tilde{Q}$ on $\tilde{A}[1]$.

By construction  the inclusion $C(\cM)^{\cD} \to C(\cM)$ respects the action of the homological vector fields, so
that the quotient Lie algebroid  structure on $\tilde{A}$ (obtained via the derived bracket construction using $\tilde{Q}$) has the property that the projection $A\to \tilde{A}$ is a Lie algebroid morphism. Hence it agrees with the Lie algebroid structure obtained
by the ideal system.
\end{proof}

We present an example where  $\cD$ is singular, i.e. just a graded $C(\cM)$-submodule of $\chi(\cM)$ but not a distribution,  and the quotient is
nevertheless a smooth NQ-manifold (even though not concentrated in
degrees $0$ and $1$).
\begin{ep}\label{suD}[Singular quotient]
Let $A$ be the Lie algebra
$\su(2,\R)$, so that in a suitable basis  we have $[a_1,a_2]=a_3$,
$[a_2,a_3]=a_1$,$[a_3,a_1]=a_2$. Denote by $\xi_i$ the coordinates
on $\cM:=A[1]$ dual to the basis $a_i$. The  homological vector field on $\cM$  is
$Q=\xi_2\xi_1\pd{\xi_3}+\xi_1\xi_3\pd{\xi_2}+\xi_3\xi_2\pd{\xi_1} $.

On $\cM$ consider  the $C(\cM)$-span $\cD$ of  $\frac{\partial}{\partial \xi_1}$  and $[Q, \frac{\partial}{\partial
\xi_1}]=\xi_3\frac{\partial}{\partial
\xi_2}-\xi_2\frac{\partial}{\partial \xi_3}$. It is involutive   but \emph{not} a distribution (compare also to Ex. \ref{gD}). The set of invariant functions $C(\cM)^{\cD}$ is $\{\alpha+\beta \xi_2\xi_3: \alpha,\beta\in \R \}$, so it is isomorphic
to the functions on  $\R[2]$ (with vanishing homological vector field).
\end{ep}

 Given a Lie algebroid $A$,
Jotz-Ortiz define the notion of IM-foliations \cite[Def. 5.1]{JotzOrtizFol}.

\begin{defi} \label{IM}
An \emph{IM-foliation} for the Lie algebroid $A \to M$ consists of
\begin{itemize}
 \item a Lie subalgebroid $B\to M$ of $A$,
\item an involutive distribution $F$ on $M$,
\item a flat $F$-connection $\nabla$ on the vector bundle  $A/B\to M$,
\end{itemize}
such that, denoting  $\Gamma_{\nabla}(A):=\{a\in \Gamma(A):   \nabla(a \text{ mod } B)=0 \}$: 
\begin{itemize}
 \item[(i)] if  $a,a' \in \Gamma_{\nabla}(A)$   then $[a,a']_A\in \Gamma_{\nabla}(A)$,
 \item[(ii)] if  $b\in \Gamma(B)$, and $a\in \Gamma_{\nabla}(A)$, then $[b,a]_A\in \Gamma(B)$,
 \item[(iii)] the anchor $\rho_A$ maps $B$ into $F$,
 \item[(iv)] if $a \in \Gamma_{\nabla}(A)$ and $Z\in \Gamma(F)$, then $[\rho_A(a),Z]\in \Gamma(F)$.
 \end{itemize}
\end{defi}

\begin{remark}
IM-foliations are the infinitesimal counterparts of ideal systems (Def. \ref{is}). More precisely: if $(B,F,\nabla)$ is an IM-foliation for which $F$ is simple and $\nabla$ has no holonomy, then $(B,R,\Theta)$ is an ideal system,  where $R:=\{(x,x'):\pi(x)=\pi(x')\}$ (for  $\pi \colon M\to M/F$ the projection) and  $\Theta$ is the  action of $R$ on   $A/B\to M$ by parallel transport along $\nabla$.  
\end{remark}

The relevance of  IM-foliations is that they are in bijective correspondence with morphic foliations on $A$ and - when $A$ is integrable - with multiplicative foliations on the source simply connected Lie groupoid integrating $A$ \cite{JotzOrtizFol}. We now show that they are also in correspondence with involutive distributions on $A[1]$ which are preserved by $Q$.

\begin{prop}
Given a Lie algebroid  $A\to M$, there is a bijection between 
\begin{itemize}
\item   involutive distributions $\cD$ on $A[1]$ such that $[Q,\cD]\subset \cD$,
\item IM-foliations on $A$.
\end{itemize} 
\end{prop}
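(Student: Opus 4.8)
The plan is to combine Lemmas \ref{distrCDO} and \ref{BF} with an analysis of the condition $[Q,\cD]\subset\cD$. By those two lemmas, an involutive distribution $\cD$ on $A[1]$ corresponds exactly to a triple $(B,F,\nabla)$ where $B\to M$ is a subbundle of $A$, $F$ is an involutive distribution on $M$, and $\nabla$ is a flat $F$-connection on $A/B$; here $\cD_{-1}=\Gamma(B)$, and $\cD_0=\Gamma(C)$ with $C$ the subbundle of $\mathfrak D(A)$ given by \eqref{Cas}. So the correspondence between data is already established; what must be shown is that, for such a $\cD$, the extra condition $[Q,\cD]\subset\cD$ holds \emph{if and only if} $(B,F,\nabla)$ satisfies the four axioms (i)--(iv) of Definition \ref{IM}. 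I will prove this equivalence, and then conclude that $\cD\mapsto(B,F,\nabla)$ is the desired bijection (injectivity and surjectivity onto IM-foliations being immediate once the axioms are matched, since the underlying data $(B,F,\nabla)\leftrightarrow\cD$ correspondence is already bijective).

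First I would record the reformulation of $[Q,\cD]\subset\cD$ in terms of generators: since $\cD$ is generated as a $C(\cM)$-module by $\cD_{-1}$ and $\cD_0$, and $[Q,-]$ is a derivation of the bracket, the condition $[Q,\cD]\subset\cD$ is equivalent to $[Q,\cD_{-1}]\subset\cD_0$ together with $[Q,\cD_0]\subset\cD_1=C_1(\cM)\cdot\cD_0$. (One checks $[Q,C_k(\cM)\cdot X]=\pm (Qf)\cdot X + f\cdot[Q,X]$, and $Qf\in C_{k+1}(\cM)$, so module-closedness follows from closedness on generators; the subtle point, that $[Q,\cD_0]$ lands in $\cD_1$ rather than merely in $\chi_1(\cM)$, uses that $\cD_1=C_1(\cM)\cdot\cD_0$ is the full degree $1$ part of $\cD$.) Then I would translate each of these two conditions, plus their interaction with the generators, into classical language using the dictionary $\chi_{-1}=\Gamma(A)$, $\chi_0=CDO(A)\cong\mathfrak D(A)$, the derived-bracket formulas \eqref{eq:br-rho}, and the description of $\cD_0$ via \eqref{Cas}. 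Concretely: $[Q,b]\in\cD_0$ for $b\in\Gamma(B)$ unpacks (applying to functions) to $\rho_A(b)\in\Gamma(F)$, which is axiom (iii), and (applying to $\chi_{-1}$ via $a\mapsto[[Q,b],a]=[b,a]_A$) to $[b,a]_A\in\Gamma(B)$ whenever $a\in\Gamma_\nabla(A)$, which is axiom (ii). The remaining conditions $[Q,\cD_0]\subset\cD_1$ and the closedness giving (i) and (iv) are extracted exactly as in the proof of Proposition \ref{distri}: indeed the computations there showing (i)--(iv) from $[Q,\cD]\subset\cD$ are precisely the ``$\Rightarrow$'' direction here (with $\Gamma_\nabla(A)$ in place of $\Theta$-stable sections), and the point is that each step is reversible.

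The main work, and the place I expect the real friction, is the reverse implication: showing that axioms (i)--(iv) of Definition \ref{IM} force $[Q,\cD]\subset\cD$, \emph{without} the regularity hypotheses ($F$ simple, $\nabla$ holonomy-free) that Proposition \ref{distri} assumed --- since Definition \ref{IM} makes no such assumption. The key is that everything can be checked locally and pointwise: around any point choose a local frame $\{e_i\}$ of $A$ adapted to $B$ and to $\nabla$-flat sections of $A/B$ (as in the proof of Lemma \ref{BF}), so $\cD_0$ is locally spanned over $C^\infty(M)$ by CDOs $X_0$ with $X_0(e_i)\in\Gamma(B)$ for the flat lifts and $\un{X_0}\in\Gamma(F)$, together with $C_1(\cM)\cdot\cD_{-1}$. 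Then $[Q,b]$ for $b$ a $B$-frame section is computed via \eqref{eq:br-rho}: its symbol is $\rho_A(b)\in\Gamma(F)$ by (iii), and $[[Q,b],e_j]=[b,e_j]_A$, which lies in $\Gamma(B)$ when $e_j$ is a flat lift by (ii), so $[Q,b]\in\cD_0$; one extends to all of $\cD_{-1}$ by $C(\cM)$-linearity and the derivation property. Similarly $[Q,X_0]$ for $X_0$ a $\cD_0$-generator is handled by the identity $[X_0,[Q,a]]=[[X_0,Q],a]+[Q,[X_0,a]]$ used in reverse: axioms (i) and (iv) say exactly that $[Q,a]$ for $a\in\Gamma_\nabla(A)$ again satisfies the defining conditions of $\cD_0$ (flat lifts mapped into $B$, symbol into $F$), which forces $[X_0,Q]$ — hence $[Q,X_0]$ — into $\cD_1$. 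I would organize this as: (a) the generator-level reformulation of $[Q,\cD]\subset\cD$; (b) the ``$\Leftarrow$'' direction via the adapted local frame, checking the two generator conditions; (c) note the ``$\Rightarrow$'' direction is the content of the proof of Proposition \ref{distri} read without the regularity assumptions (those were only needed there to produce an honest ideal system and a smooth quotient, not to extract (i)--(iv)). The one genuinely delicate bookkeeping point is ensuring that the local frame adapted simultaneously to $B$ and to $\nabla$ exists and that the resulting characterization of $\cD_0$ is frame-independent — this is already implicit in Lemma \ref{BF}, so I would cite it rather than redo it.
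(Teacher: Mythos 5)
Your overall architecture coincides with the paper's: the data correspondence comes from Lemmas \ref{distrCDO} and \ref{BF}, the direction ``$[Q,\cD]\subset\cD\Rightarrow$ (i)--(iv)'' is read off from the proof of Prop. \ref{distri} (correctly observing that the regularity hypotheses are not used there), and the converse is reduced to the two generator conditions $[Q,\cD_{-1}]\subset\cD_0$ and $[Q,\cD_0]\subset\cD_1$, the first of which you handle exactly as the paper does. However, your argument for $[Q,\cD_0]\subset\cD_1$ contains a false intermediate claim and omits a lemma that the paper has to prove.

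The false claim: you assert that axioms (i) and (iv) imply that $[Q,a]=\mathrm{ad}_a$, for $a\in\Gamma_\nabla(A)$, ``satisfies the defining conditions of $\cD_0$ (flat lifts mapped into $B$, symbol into $F$)''. Neither holds: the symbol of $\mathrm{ad}_a$ is $\rho_A(a)$, which axiom (iv) only requires to \emph{normalize} $\Gamma(F)$, not to lie in it (take $B=0$, $F=0$: then $\Gamma_\nabla(A)=\Gamma(A)$ and $\rho_A(a)$ is arbitrary), and axiom (i) says $\mathrm{ad}_a$ maps flat sections to flat sections, not into $\Gamma(B)$. So $[Q,a]\notin\cD_0$ in general. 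What is true --- and what the paper proves from (i), (ii) and (iv) --- is that $[[Q,a],X_0]=[\mathrm{ad}_a,X_0]\in\cD_0$ for $X_0\in\cD_0$, i.e.\ $\mathrm{ad}_a$ \emph{normalizes} $\cD_0$; this is the term that actually appears in the Jacobi identity $[[Q,X_0],a]=[Q,[X_0,a]]+[[Q,a],X_0]$, so your step is repairable but fails as written. The missing lemma: to conclude $[Q,X_0]\in\cD_1$ from $[[Q,X_0],a]\in\cD_0$ for all $a\in\Gamma_\nabla(A)$ you are implicitly invoking the characterization $\cD_1=\{P\in\chi_1(\cM):[P,a]\in\cD_0\text{ for all }a\in\Gamma_\nabla(A)\}$ (eq.\ \eqref{PP}). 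The inclusion ``$\supset$'' there is not automatic; the paper establishes it by the Taylor expansion $P=\sum_i X^i\xi_i+\tfrac12\sum_{i,k}b^{ik}\xi_i\xi_k$ in a local frame of $\Gamma_\nabla(A)$, with $X^i=[P,a_i]$ and $b^{ik}=[[P,a_i],a_k]$. Your remark that ``$\cD_1=C_1(\cM)\cdot\cD_0$ is the full degree $1$ part of $\cD$'' addresses a different point and does not supply this. Both gaps must be filled for the converse direction to stand.
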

\begin{proof}
Given an   involutive distribution on $A[1]$, consider the triple
 $(F,B,\nabla)$ encoding it by Lemmas \ref{distrCDO} and \ref{BF}.
That this triple is an IM-foliation was checked in  the proof of Prop. \ref{distri} (notice that the arguments in the proof do not use that $F$ is simple and $\nabla$ has no holonomy).

For the converse, let $(F,B,\nabla)$ be an IM-foliation, and denote by $\cD$ the involutive distribution on $A[1]$ given by Lemmas \ref{distrCDO} and \ref{BF}. We will use repeatedly that $\cD_0$ is given by the subset of $\chi_0(\cM)= CDO(A)$ specified in \eqref{Cas}.
Our aim is to show that $[Q,\cD]\subset \cD$.
It is sufficient to show this inclusion for $\cD_{-1}$ and $\cD_0$, as they generate $\cD$ as a $C(\cM)$-module. 

First we show $[Q,\cD_{-1}]\subset \cD_0$.
Let $b\in \Gamma(B)= \cD_{-1}$. Then using ii) and iii) in Def. \ref{IM} one sees that  $[Q,b]\in \cD_0$. 

The space $\cD_1$ of degree 1 elements of $\cD$ can be described as 
\begin{equation}\label{PP}
\cD_1=\{P\in \chi_1(\cM): [P,a]\in \cD_0 \text{ for all }a\in \Gamma_{\nabla}(A)\}.
\end{equation}
Using this we show that $[Q,\cD_{0}]\subset \cD_1$: let $X_0\in \cD_0$ and  $a\in \Gamma_{\nabla}(A)$. We have $$[[Q,X_0],a]=[Q,[X_0,a]]+[[Q,a],X_0].$$ The first term on the r.h.s. lies in $\cD_0$ since $[X_0,a]\in \cD_{-1}$. The second term on the r.h.s. is $[ad_a,X_0]$, and is seen to lie in $\cD_0$ using i),ii) and iv) of Def. \ref{IM}.

Last, we prove eq. \eqref{PP}:  the inclusion ``$\subset$'' follows immediately from $\cD_1=C_1(\cM) \cD_0$. For the opposite inclusion, 
fix locally a frame $a_i$ of $A$ consisting of elements of $\Gamma_{\nabla}(A)$, and denote by $\xi_i\in \Gamma(A^*)=C_1(\cM)$ the dual frame.  Any $P\in \chi_{1}(\cM)$ can be written as
\begin{equation*}
P=\sum_i X^i\xi_i+\frac{1}{2}\sum_{i,k} b^{ik}\xi_i\xi_k
\end{equation*}
where $X^i:= [P,a_i]\in \chi_{0}(\cM)$ and $b^{ik}:=[[P,a_i],a_k]\in \chi_{-1}(\cM)$. This ``Taylor expansion'' identity is proven noticing   that any  $P\in \chi_{1}(\cM)$ is determined by the values of $[P,a_i]$ for all $i$ (this is clear in coordinates), and checking by direct computation that these values coincide for both sides of the identity. Now, if $P$  belongs to the r.h.s. of eq. \eqref{PP}, 
then  $X^i\in \cD_0$ and $b^{ik}\in \Gamma(B)=\cD_{-1}$, showing that $P\in C_1(\cM) \cD_0=\cD_1$.
\end{proof}

\section{Actions}\label{sec:ac} 

In this section we consider distributions that arise from certain kinds of (infinitesimal) actions. In \S\ref{defact} we define (non-strict) actions of strict Lie-2 algebras on NQ-1 manifolds. In \S\ref{quotmu} we notice that such actions do not define involutive distributions in general. We give a sufficient condition for this to happen, and use Prop. \ref{exa} to perform reduction. Some examples are presented in \S\ref{exa}.

\subsection{Actions on   NQ-1 manifolds}\label{defact}

Recall that an  \emph{$L_{\infty}$-algebra}\footnote{More precisely,
  this is a  so-called \emph{flat}
  $L_\infty$-algebra, and is also the original definition. Flat means that the $0$th-bracket (or curvature) vanishes.
   All
  $L_\infty$-algebras appearing in this paper (for example  DGLAs of vector fields on NQ-manifolds) are of this kind.}
  is a  graded vector space $L=\bigoplus_{i\in \Z}L_i$ endowed with a sequence of multi-brackets ($n\ge 1$)
\begin{equation*}
[\dots ]_n \colon \wedge ^n L \to L
\end{equation*}
of degree $2-n$, satisfying the quadratic relations specified in \cite[Def. 2.1]{LadaMarkl}\footnote{\cite{LadaMarkl} uses a grading opposite to ours.}.
 Here $\wedge ^n L$ denotes the $n$-th graded skew-symmetric product of $L$.
When $[\dots ]_n=0$ for $n\ge3$ we recover the notion of DGLA (Def. \ref{dgla}), which is the one of interest in this note. An \emph{$L_{\infty}$-morphism} $\phi \colon L \rightsquigarrow L'$  between $L_{\infty}$-algebras is a sequence of maps ($n\ge 1$)
\begin{equation*}
\phi_n \colon \wedge ^n L \to L'
\end{equation*}
of degree $1-n$, satisfying certain relations
(see \cite[Def. 5.2]{LadaMarkl} in the case when $L'$ is a DGLA).

 \begin{defi}\label{def:DGLAaction}
Let $L$ be a $L_\infty$-algebra   and $\cM$ be an   NQ-manifold.
An \emph{action} of $L$ on $\cM$ is an $L_{\infty}$-morphism 
$$\phi \colon L \rightsquigarrow (\chi(\cM), d_Q:=[Q,-], [-,-])$$
where the right hand side is the DGLA of vector fields on $\cM$ as in Lemma \ref{lemma:vf}.
\end{defi}

\begin{remark}
 Even for $L$ a DGLA, such action $\phi$ might not be a strict
DGLA-morphism. There  are important instances of this. For
example,  given a (ordinary) Lie algebra $\g$ and a Poisson manifold
$(M,\pi)$, \v{S}evera \cite{Se}
 defines an \emph{up to homotopy Poisson action} as an $L_{\infty}$-morphism
 $\g \rightsquigarrow (C(T^*[1]M)[1],[\cdot,\cdot]_{S},[\pi,\cdot]_{S})$,
 where  $[\cdot,\cdot]_{S}$ denotes the Schouten bracket.
This is equivalent to the special case of   Def. \ref{def:DGLAaction} in which $(\cM,Q)=(T^*[1]M, [\pi,\cdot]_{S})$ and the action is Hamiltonian.
\end{remark}
 
\begin{defi}\label{strict}

A \emph{strict Lie 2-algebra}\footnote{The term  \emph{Lie 2-algebra}
 denotes an $L_{\infty}$-algebra concentrated in degrees $-1$ and
  $0$  {(see \cite{baez:2algebras})}. So a strict Lie 2-algebra is a Lie 2-algebra for which $[\dots ]_3=0$.}
   is a DGLA (see Def. \ref{dgla}) concentrated in degrees $-1$ and $0$.\end{defi}

From now on we will restrict ourselves to the case in which 
  $\cM$ is an NQ-1 manifold and $(L=L_{-1}\oplus L_{0}, \delta:=[\cdot]_1, [\cdot,\cdot]:=[\cdot,\cdot]_2)$  is a
 {strict} Lie 2-algebra.  One motivation for having  $L$ concentrated in degrees $-1$ and $0$  is that if a DGLA acts strictly and  almost freely  on $\cM$ (see Def. \ref {def:strict}) then it must be subject to this degree constraint.
  
An action
\begin{equation}\label{nq1action}
\phi \colon L_{-1}\oplus L_{0} \rightsquigarrow \chi(\cM).
\end{equation}
is an $L_{\infty}$-morphisms
between DGLAs.
We spell out its components. 
By degree reasons, the only non-zero components of $\phi$ are $\phi_1=:\mu$ and $\phi_2=:\eta$. More explicitly, 
\begin{align*}
  \mu\colon & L_{0} \rightarrow  \chi_{0}(\cM) \;\;\;\;\\
    \mu\colon & L_{-1}  \rightarrow  \chi_{-1}(\cM) \;\;\;\; \\
 \eta\colon&  \wedge^2 L_0 \rightarrow \chi_{-1}(\cM),
\end{align*}
subject to the constraints
\begin{align}
\label{constr1}
 d_Q \mu=& \mu \delta \\
\label{constr2}
 \mu [x,y]- [\mu x,\mu y]=&d_Q (\eta
(x \wedge y))\;\;\;\;\;\;\; \forall x,y\in L_{0},\\
 \label{constr3}
\mu [w,x]- [\mu w,\mu x]=& \eta (\delta w \wedge
x)\;\;\;\;\;\;\;\;\;\;\;  \forall w\in L_{-1}, x \in L_{0}, 
\end{align}
as well as
an equation for $x\wedge y \wedge z \in \wedge^3L_{0}$:
\begin{align} \label{constr4}
0=&\eta(x\wedge[y,z])
-\eta(y\wedge[x,z])+\eta(z\wedge[x,y])  \\
+&[\mu(x),\eta(y\wedge z)]-[\mu(y),\eta(x\wedge z)]
+[\mu(z),\eta(x\wedge y)]  \nonumber.
\end{align}
Notice that condition \eqref{constr1} says that $\mu$ is a map of complexes, 
\eqref{constr2} says that $\mu|_{L_{0}}$ is a morphism of Lie
algebras up to homotopy, and \eqref{constr3} says that $\mu|_{L_{-1}}$ is a morphism of Lie
modules up to homotopy.

\begin{remark}
  By eq. \eqref{constr1}, the image of the action map $\mu$ will be contained in the truncated DGLA
$\chi_{-1}(\cM) \oplus  \{X \in \chi_{0}(\cM): [Q,X]=0 \}.$   Hence  
actions of strict Lie 2-algebra on $\cM$ can be formulated using only the truncated DGLA.
\end{remark}


\subsection{Quotients by   actions}\label{quotmu}

We   define the distribution associated to an  action of a strict Lie 2-algebra, and study the corresponding quotient.

All along this subsection we consider  a strict Lie 2-algebra $L$ (Def. \ref{strict}) and an NQ-1 manifold  $\cM$, equal to $A[1]$ for some Lie algebroid $(A,[\cdot,\cdot]_A,\rho_A)$.
Let
 $$\phi:=(\mu, \eta) \colon L \rightsquigarrow \chi(\cM)$$ be an action as in \eqref{nq1action}.
Notice that the $C(\cM)$-submodule of $\chi(M)$ generated by  $\mu(L)$ and
$\eta(\wedge^2 L_0 )$ is usually not a distribution on $\cM$ (this
problem arises already 
in the case of  Lie algebra actions on  ordinary manifolds).  
Moreover it has two defects: first it is not involutive  (as suggested by eq. \eqref{constr2}), and second the operator
$d_Q:=[Q,\cdot]$ does not preserve its sections. A counterexample for both
defects is given in Ex. \ref{notinv} below. {This hints to a possible up-to-homotopy version of the concept of
  involutivity, which we delay to  later investigation.}
{For the moment, we might consider a ``completion''}  of the above
$C(\cM)$-module:
\begin{equation}\label{defD}
\cD:=Span_{C(\cM)}\{\mu(L)\;,\; \eta(\wedge^2 L_0)\;,\; d_Q (\eta( \wedge^2 L_0))\}.
\end{equation}
\emptycomment{\mcomment{About we could use $\eta( L_0\wedge \delta L_{-1})$ too}}

Unfortunately, the   fact that $\phi$ is an $L_{\infty}$-morphism (the constraints \eqref{constr1}-\eqref{constr4}) does not imply that $\cD$ is involutive. A counterexample is given by Ex. \ref{notin} below. Making an additional assumption, we can achieve that $\cD$ is involutive and $Q$-invariant:
\begin{prop}\label{goodaction}
If $\phi$ is a   action of the strict  Lie 2-algebra $L$ on $\cM$
for which 
$[\cD_0,\cD_{-1}]\subset \cD_{-1}$,  then\\
 i) $[Q,\cD]\subset \cD$\\
  ii) $\cD$ is involutive
\end{prop}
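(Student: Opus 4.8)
\textbf{Proof plan for Proposition \ref{goodaction}.}

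The plan is to exploit that $\cD$ is generated as a $C(\cM)$-module by homogeneous elements concentrated in degrees $-1$ and $0$, so that both claims reduce to checking a finite list of bracket-type inclusions on generators, the $C(\cM)$-linearity then propagating everything (for $[Q,-]$ using that $Q$ is a derivation, and for $[-,-]$ using the graded Leibniz rule). Concretely, $\cD_{-1}=\Span_{C(\cM)}\{\mu(L_{-1}),\,\eta(\wedge^2L_0),\,d_Q(\eta(\wedge^2L_0))\}$ and $\cD_0=\Span_{C(\cM)}\{\mu(L_0),\,\text{degree-raising brackets of the degree }-1\text{ generators with }C_1(\cM)\}$; note $\cD$ has no component in degree $\ge 1$ except what sits inside $C_{\ge 1}(\cM)\cdot\cD$. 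I will therefore organize the proof around the generating vector fields $\mu(x)$ for $x\in L_0$, $\mu(w)$ for $w\in L_{-1}$, $\eta(x\wedge y)$, and $d_Q(\eta(x\wedge y))$.

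\emph{Part (i): $[Q,\cD]\subset\cD$.} Since $[Q,-]$ is a derivation of the $C(\cM)$-module structure, it suffices to show $[Q,g]\in\cD$ for each generator $g$. For $g=\eta(x\wedge y)\in\chi_{-1}(\cM)$, the element $d_Q(\eta(x\wedge y))$ is a generator of $\cD$ by construction, so this case is immediate. For $g=d_Q(\eta(x\wedge y))$ we get $[Q,g]=d_Q^2(\eta(x\wedge y))=0$ because $d_Q^2=\tfrac12[[Q,Q],-]=0$. For $g=\mu(w)$ with $w\in L_{-1}$, constraint \eqref{constr1} gives $d_Q\mu(w)=\mu(\delta w)\in\mu(L_0)\subset\cD_0$. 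The only remaining case is $g=\mu(x)$ with $x\in L_0$: here I use \eqref{constr2}, which reads $d_Q\mu(x)$ \dots\ wait --- \eqref{constr1} already covers $d_Q\mu$ on all of $L$, so in fact $[Q,\mu(x)]=\mu(\delta x)=0$ since $\delta|_{L_0}=0$ (as $L$ is concentrated in degrees $-1,0$ and $\delta$ has degree $1$). Thus every generator is handled, and (i) follows.

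\emph{Part (ii): $\cD$ involutive.} By the graded Leibniz rule it is enough to check $[g,g']\in\cD$ for all pairs of generators $g,g'$. The pairs with both in degree $-1$ land in $\chi_{-2}(\cM)=0$, so they are trivial. The crucial pairs are: (a) $[\mu(x),\mu(y)]$ for $x,y\in L_0$, which by \eqref{constr2} equals $\mu[x,y]-d_Q(\eta(x\wedge y))$; the first term lies in $\mu(L_0)\subset\cD_0$ and the second is a generator of $\cD$, so this is in $\cD$. (b) $[\mu(x),\mu(w)]$ for $w\in L_{-1}$: by \eqref{constr3} this is $\mu[x,w]-\eta(\delta w\wedge x)$ up to sign, and $\mu[x,w]\in\mu(L_{-1})\subset\cD_{-1}$ while $\eta(\delta w\wedge x)\in\eta(\wedge^2L_0)\subset\cD_{-1}$. (c) $[\mu(x),\eta(y\wedge z)]$: rearranging \eqref{constr4}, $[\mu(x),\eta(y\wedge z)]=-\eta(x\wedge[y,z])+\eta(y\wedge[x,z])-\eta(z\wedge[x,y])+[\mu(y),\eta(x\wedge z)]-[\mu(z),\eta(x\wedge y)]$; I close a short induction/symmetry argument here, or more simply observe that the three $\eta(\cdot)$ terms lie in $\cD_{-1}$ and the remaining two brackets are of the same type $[\mu(\cdot),\eta(\cdot\wedge\cdot)]$, so \eqref{constr4} by itself shows that the span of all such brackets modulo $\cD_{-1}$ is cut out consistently --- in fact \eqref{constr4} directly gives that $[\mu(x),\eta(y\wedge z)]\in\cD_{-1}+\{\text{lower-order }\mu\text{-}\eta\text{ brackets}\}$, and since all those terms are degree $-1$ one concludes membership in $\cD_{-1}$ by a downward recursion on a basis of $L_0$. (d) $[\mu(x),d_Q\eta(y\wedge z)]$: write $[\mu(x),d_Q\eta(y\wedge z)]=[Q,[\mu(x),\eta(y\wedge z)]]\pm[[Q,\mu(x)],\eta(y\wedge z)]$; the second term vanishes since $[Q,\mu(x)]=\mu(\delta x)=0$, and the first is $d_Q$ of something in $\cD_{-1}$ by (c), hence in $\cD$ by Part (i). (e) Brackets involving the hypothesis: pairs of the form $[\mu(x),\text{(degree-}0\text{ generator of }\cD)]$ that are not $\mu(L_0)$, i.e.\ the CDOs obtained from $C_1(\cM)\cdot\cD_{-1}$, together with $[\cD_0,\cD_{-1}]$-type brackets; these are precisely where the standing assumption $[\cD_0,\cD_{-1}]\subset\cD_{-1}$ is invoked, and combined with (a) it gives $[\cD_0,\cD_0]\subset\cD_0$ via the identity $[\cD_0,\cD_0]\subset\cD_0$ reduced to generators plus $C(\cM)$-linearity (the $C_1(\cM)$-coefficient terms are controlled by $[\cD_0,\cD_{-1}]\subset\cD_{-1}$ and $[\cD_0,C_1(\cM)]\subset C_1(\cM)$). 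Assembling (a)--(e): $\cD$ is closed under $[-,-]$ on generators, hence involutive.

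\emph{Expected main obstacle.} The delicate point is case (c)/(e): neither $\eta(\wedge^2L_0)$ nor its $d_Q$-image is a priori closed under bracketing with $\mu(L_0)$ purely from \eqref{constr1}--\eqref{constr4}, and the brackets $[\mu(x),\eta(y\wedge z)]$ sit in degree $-1$ but need not lie in $\Span_{C(\cM)}\mu(L_{-1})+\Span_{C(\cM)}\eta(\wedge^2L_0)$ without using \eqref{constr4} in an essential recursive way; this is exactly the phenomenon the text flags before the proposition (``the fact that $\phi$ is an $L_\infty$-morphism does not imply that $\cD$ is involutive''), so the proof must lean on the extra hypothesis $[\cD_0,\cD_{-1}]\subset\cD_{-1}$ to close the degree-$0$ bracket and on Part (i) to close the brackets that produce $d_Q$-terms. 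I would write these two reductions carefully and treat the rest as routine Leibniz-rule bookkeeping.
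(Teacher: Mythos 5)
Your overall strategy is the paper's: reduce both claims to the generators $\mu(L_{-1})$, $\eta(\wedge^2 L_0)$, $\mu(L_0)$, $d_Q\eta(\wedge^2 L_0)$, use \eqref{constr1} and $d_Q^2=0$ for (i), and for (ii) use \eqref{constr2}, the graded Jacobi identity together with $[Q,\mu(x)]=0$, part (i), and the standing hypothesis. But part (ii) has two concrete problems. The first is your case (c). Note that $d_Q\eta(x\wedge y)=[Q,\eta(x\wedge y)]$ has degree $1+(-1)=0$, so it belongs to $\cD_0$, not to $\cD_{-1}$ as in your opening decomposition; and the bracket $[\mu(x),\eta(y\wedge z)]$ is an instance of $[\cD_0,\cD_{-1}]$, hence lies in $\cD_{-1}$ \emph{by the standing hypothesis} --- no argument is needed. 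The argument you actually give, extracting it from \eqref{constr4} by a ``downward recursion'', cannot work: \eqref{constr4} is a single alternating-sum relation among the three brackets $[\mu(x),\eta(y\wedge z)]$, $[\mu(y),\eta(x\wedge z)]$, $[\mu(z),\eta(x\wedge y)]$ modulo terms of the form $\eta(\cdot\wedge\cdot)$, and one linear relation does not pin down any individual summand. Indeed Example \ref{notin} of the paper exhibits a morphism satisfying \eqref{constr1}--\eqref{constr4} for which $[\mu(e_0),\eta(e_0\wedge e_0')]\notin\cD_{-1}$, so no derivation of (c) from the constraints alone can be correct; this is precisely why the hypothesis $[\cD_0,\cD_{-1}]\subset\cD_{-1}$ is imposed.

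The second problem is an omitted case: the pair $[d_Q\eta(m),d_Q\eta(n)]$ for $m,n\in\wedge^2 L_0$. Both entries have degree $0$, so this bracket is covered neither by the hypothesis nor by your cases (a)--(e) (the omission presumably stems from the degree misfiling above, which would make this bracket land in degree $-2$ and vanish). It is handled exactly like your case (d): by the Jacobi identity and $d_Q^2=0$,
$$[[Q,\eta(m)],[Q,\eta(n)]]=[Q,[[Q,\eta(m)],\eta(n)]],$$
where $[[Q,\eta(m)],\eta(n)]\in[\cD_0,\cD_{-1}]\subset\cD_{-1}$ by hypothesis, so part (i) gives membership in $\cD_0$. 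With case (c) replaced by a direct appeal to the hypothesis and this missing pair added, your argument coincides with the paper's proof.
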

\begin{proof}
i) follows from the facts that  $\mu$ satisfies eq.
\eqref{constr1} and that $d_Q^2=0$.

 ii) Since  $[\cD_0,\cD_{-1}]\subset \cD_{-1}$ by assumption, and     $[\cD_{-1},\cD_{-1}]\subset \cD_{-2}=\{0\}$ by degree reasons, we just need to check
 that $\cD_0$ is closed under the bracket, i.e., we just need to consider $\mu(L_0)$
 and $ d_Q (\eta( \wedge^2 L_0))$.
Let $x,y\in L_0$ and $m, n \in L_0 \wedge L_0$. We have $[\mu(x),\mu(y)]\in \cD_0$ by eq. \eqref{constr2}.
 Further
$$[\mu(x),[Q,\eta( m )]]= [Q,[\mu(x),\eta(
m )]]- [[Q,\mu(x)],\eta( m )]$$ also lies in $\cD_0$. Indeed
  $[Q,\mu(x)]=0$ by eq. \eqref{constr1}, and the first term on the r.h.s. lies
in $\cD_0$ because of  $[\cD_0,\cD_{-1}]\subset \cD_{-1}$
 and because of i).

Last, $[[Q,
\eta(m)],[Q, \eta(n)]]= [Q,[[Q,
\eta(m)], \eta(n)]]\in \cD_0$, again  because of  $[\cD_0,\cD_{-1}]\subset \cD_{-1}$
 and   i).
\end{proof}

We summarize the conditions under which we can nicely
quotient $\cM$ by the  action $\phi$:
\begin{cor}\label{corquot}
Let $\phi$ be an action of the strict Lie 2-algebra $L$ on $\cM=A[1]$
such that   $\cD$ is a distribution, $[\cD_0,\cD_{-1}]\subset \cD_{-1}$ and the
 assumptions  of Prop. \ref{distriE}   are satisfied.
Then    $\cM/\cD$ is an NQ-1 manifold. 
It corresponds to the quotient of $A$ by the ideal system given by
\begin{itemize}
\item
the Lie subalgebroid $B=span\{\mu(L_{-1}), \eta(\wedge^2 L_0)\}$ of $A$
\item the Lie subgroupoid $R$ of $M\times M$ associated to the integrable distribution\\
 $F:=span\{\un{\mu(L_0)},\rho_A(\eta(\wedge^2 L_0))\}$ on $M$
\item the  action of $R$  on $A/B$ induced by
\noindent $span_{C^{\infty}(M)}\{\mu(L_0),d_Q(\eta(\wedge^2 L_0))\}\subset \chi_0(\cM)\cong CDO(A)$ (where the identification is given in by Lemma \ref{N1}).
\end{itemize}
\end{cor}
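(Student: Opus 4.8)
The plan is to assemble the statement from results already established in the paper, treating it essentially as a bookkeeping corollary. First I would invoke Proposition \ref{goodaction}: the hypothesis $[\cD_0,\cD_{-1}]\subset\cD_{-1}$ gives both that $[Q,\cD]\subset\cD$ and that $\cD$ is involutive. Together with the standing assumption that $\cD$ is in fact a distribution (not merely a graded submodule) and the regularity assumptions of Proposition \ref{distriE} (namely $F$ simple and $\nabla$ holonomy-free), we are exactly in the set-up of Proposition \ref{quot}. That proposition then immediately yields that $\cM/\cD$ is an NQ-1 manifold, isomorphic to $\tilde A[1]$, where $\tilde A\to M/F$ is the quotient of $A$ by the ideal system produced in Proposition \ref{distri}. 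So the only real content left is to identify the triple $(B,F,\nabla)$ attached to this particular $\cD$ with the three bulleted items in the statement.

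Second, I would carry out this identification degree by degree using Lemmas \ref{distrCDO} and \ref{BF}. For $B$: by Lemma \ref{distrCDO}, $\Gamma(B)=\cD_{-1}$, and $\cD_{-1}$ is the degree $-1$ part of the $C(\cM)$-module spanned by $\mu(L)$, $\eta(\wedge^2L_0)$ and $d_Q(\eta(\wedge^2L_0))$. Since $\mu(L_{-1})$ and $\eta(\wedge^2L_0)$ already sit in $\chi_{-1}(\cM)=\Gamma(A)$, while $\mu(L_0)$ and $d_Q(\eta(\wedge^2L_0))$ sit in $\chi_0(\cM)$ and contribute to $\cD_{-1}$ only after multiplication by $C_1(\cM)$ — which lands back inside the $C^\infty(M)$-span of $\mu(L_{-1})$ and $\eta(\wedge^2L_0)$ once we use $[\cD_0,\cD_{-1}]\subset\cD_{-1}$ — we get $B=\mathrm{span}\{\mu(L_{-1}),\eta(\wedge^2L_0)\}$ as claimed. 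For $F$: Lemma \ref{BF} says $F=s(C)$ where $\Gamma(C)=\cD_0$; the symbols of the degree-$0$ generators of $\cD$ are $\un{\mu(L_0)}$ and $\un{d_Q(\eta(\wedge^2L_0))}$, and by the derived-bracket formula \eqref{eq:br-rho} the symbol of $[Q,\eta(x\wedge y)]$ is $\rho_A(\eta(x\wedge y))$, so $F=\mathrm{span}\{\un{\mu(L_0)},\rho_A(\eta(\wedge^2L_0))\}$. For $\nabla$: again by Lemma \ref{BF}, $\nabla_{\un{X_0}}(e\bmod B)=X_0(e)\bmod B$ for $X_0\in\Gamma(C)=\cD_0$; since $\cD_0$ is the $C^\infty(M)$-span of $\mu(L_0)$ and $d_Q(\eta(\wedge^2L_0))$ (modulo the $End(E)$ part, which acts trivially on $A/B$), the induced $R$-action on $A/B$ by parallel transport is exactly the one generated by that span of CDOs, matching the third bullet.

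Third, I would note that the ideal system asserted in the corollary is then verbatim the one built in Proposition \ref{distri} from $(B,F,\nabla)$ — the Lie subalgebroid $B$, the subgroupoid $R$ of $M\times M$ associated to $\pi\colon M\to M/F$, and the linear $R$-action on $A/B$ by $\nabla$-parallel transport — so Proposition \ref{quot} gives the final sentence. The main obstacle, such as it is, is the second step: one must be careful that the $C(\cM)$-module generators of $\cD$ include elements living in degree $0$ whose products with degree-$1$ functions feed back into $\cD_{-1}$, so that the ``naive'' generating sets $\mathrm{span}\{\mu(L_{-1}),\eta(\wedge^2L_0)\}$ and $\mathrm{span}\{\mu(L_0),d_Q(\eta(\wedge^2L_0))\}$ really do recover $B$ and $C$ on the nose. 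This is where the hypothesis $[\cD_0,\cD_{-1}]\subset\cD_{-1}$ is used a second time (beyond its role in Proposition \ref{goodaction}): it guarantees that acting by $\cD_0$ does not enlarge $\Gamma(B)$, which is precisely the compatibility Lemma \ref{distrCDO} demands. Everything else is a direct citation.
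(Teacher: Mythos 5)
Your proposal is correct and follows essentially the same route as the paper: the paper's proof consists precisely of citing Prop.~\ref{goodaction} to verify the hypotheses of Prop.~\ref{quot}, invoking Prop.~\ref{quot}, and then asserting that the ideal system of Prop.~\ref{distri} applied to this $\cD$ is the one listed, so your explicit identification of $(B,F,\nabla)$ is detail the paper leaves implicit. One minor degree-bookkeeping slip in your second step: $\mu(L_0)$ and $d_Q(\eta(\wedge^2 L_0))$ cannot contribute to $\cD_{-1}$ at all, since multiplying a degree-$0$ vector field by an element of $C_1(\cM)$ \emph{raises} the degree to $1$; hence $\cD_{-1}$ is simply the $C^\infty(M)$-span of the degree-$(-1)$ generators, and the second use of $[\cD_0,\cD_{-1}]\subset\cD_{-1}$ you describe there is unnecessary (that hypothesis is needed only where the corollary actually uses it, via Prop.~\ref{goodaction}).
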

\begin{proof}
$\cM/\cD$ is an NQ-1 manifold by  Prop. \ref{quot}, whose assumptions are satisfied because of Prop. \ref{goodaction}.
  By  the same Prop. \ref{quot}   $\cM/\cD$ corresponds to the quotient of $A$ by
the ideal system associated to $\cD$
as in Prop. \ref{distri}, which is the above ideal system.
\end{proof}

We specialize further the action:

\begin{defi}\label{def:strict}
 A \emph{strict action} is  a morphism of DGLAs $ \mu \colon L \to \chi(\cM)$, or equivalently an action (in the sense of Def. \ref{def:DGLAaction}) for which $\eta=0$.

A strict action is \emph{almost free} if the map  $L \to T_m\cM=T_mM\oplus A_m[1], v\mapsto (\mu(v))|_m$ is injective 
for all $m\in M$.
 \end{defi}

When the action is strict,  we showed in \cite[\S 2.3.2]{ZZL1} that there is an induced action
$\Psi  \colon (L_{-1} \rtimes  G) \times A \rightarrow A$, which furthermore is an
$\mathcal{LA}$-group action\footnote{That is, an action in the category of Lie algebroids}. The latter is called a morphic action   \cite[Def. 3.0.14]{lucathesis}. From
 \cite[Thm. 3.2.1]{lucathesis} in Stefanini's thesis,
it follows that, when the action $\Psi$ is free and proper,  $A/(L_{-1} \rtimes  G) \to M/G$ is again a Lie algebroid with the property\footnote{Even more, it is a Lie algebroid fibration  \cite[Def. 1.1]{brahic-zhu}, which means that the projection  $A/(L_{-1} \rtimes  G) \to M/G$ admit a complete Ehresmann connection. Such a map is called a ``fibration'' in the sense that it introduces the expected long exact sequence of homotopy groups of Lie algebroids.}
that the projection of $A$ onto the quotient is a Lie algebroid morphism.

\begin{cor}\label{usLuca} Let $L$ be a strict Lie-2 algebra, $\cM:=A[1]$ a NQ-1 manifold, and
let $\mu \colon L\to \chi(\cM)$ be  a strict   action.
Assume that the action is   locally free.
Further assume that the induced Lie group action
$\psi \colon G \times A \to A$  obtained restricting $\Psi$ is free and proper.

Then the Lie algebroid corresponding to the NQ-1 manifold $\cM/\cD$  agrees with Stefanini's quotient of $A$ by the  $\mathcal{LA}$-group action $\Psi$.
\end{cor}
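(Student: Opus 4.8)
The plan is to show that the two quotient constructions---the NQ-1 manifold $\cM/\cD$ on one side, and Stefanini's Lie algebroid quotient $A/(L_{-1}\rtimes G) \to M/G$ on the other---produce the same Lie algebroid, by identifying the ingredients of the ideal system in Corollary \ref{corquot} with those of the $\mathcal{LA}$-group action $\Psi$. First I would recall that since the action is strict ($\eta = 0$), the distribution of \eqref{defD} simplifies to $\cD = \mathrm{Span}_{C(\cM)}\{\mu(L)\}$, so that $\cD_{-1} = C^\infty(M)\cdot\mu(L_{-1})$ and $\cD_0 = C^\infty(M)\cdot \mu(L_0) + C_1(\cM)\cdot\mu(L_{-1})$. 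Local freeness guarantees that $\cD$ is a genuine distribution, and one checks $[\cD_0,\cD_{-1}]\subset\cD_{-1}$ directly from \eqref{constr3} with $\eta=0$ (so $\mu$ is a strict DGLA-morphism, hence $[\mu(x),\mu(w)] = \mu[x,w]\in\mu(L_{-1})$, and the Leibniz rule of the CDO action on $\Gamma(A)$ keeps us in $\cD_{-1}$). Thus the hypotheses of Corollary \ref{corquot} are in force once we also assume the relevant simplicity/no-holonomy conditions, which here are supplied by the freeness and properness of $\psi$: the orbit space $M/G$ is a smooth manifold with $M\to M/G$ a submersion, so $F = \mathrm{span}\{\un{\mu(L_0)}\}$ is simple, and the flat connection $\nabla$ on $A/B$ is exactly the one whose parallel transport is the $G$-action $\Theta$ descending from $\psi$, which has no holonomy precisely because $\psi$ is a (globally defined, free, proper) group action rather than merely an infinitesimal one.

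Next I would match the ideal system of Corollary \ref{corquot} against the data extracted from Stefanini's morphic action. Recall from \cite[\S 2.3.2]{ZZL1} that the strict action $\mu$ integrates to the $\mathcal{LA}$-group action $\Psi\colon (L_{-1}\rtimes G)\times A \to A$; the infinitesimal generators of $\Psi$ in the fiber direction are $\mu(L_{-1})\subset\Gamma(A)$, and its base component is the $G$-action on $M$ generated by $\un{\mu(L_0)}$. The Lie subalgebroid $B$ of Corollary \ref{corquot} is $\mathrm{span}\{\mu(L_{-1})\}$ (since $\eta=0$), which is exactly the image of the fiberwise infinitesimal generators---i.e. the ``vertical'' subbundle along which Stefanini quotients. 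The base distribution $F = \mathrm{span}\{\un{\mu(L_0)}\}$ is the tangent distribution to the $G$-orbits on $M$, so $M/F = M/G$. Finally the CDO-span $\mathrm{span}_{C^\infty(M)}\{\mu(L_0)\}\subset \chi_0(\cM)\cong CDO(A)$ generates, via its symbol and its action on $A/B$, precisely the infinitesimal version of the linear $R$-action $\Theta$; integrating, $\Theta$ is the linear $G = R$-action on $A/B$ through which Stefanini forms $A/(L_{-1}\rtimes G)$. Therefore the ideal system of Corollary \ref{corquot} coincides with the one whose quotient is Stefanini's $A/(L_{-1}\rtimes G)$.

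To finish, I would invoke Corollary \ref{corquot}: $\cM/\cD$ is an NQ-1 manifold, corresponding under Lemma \ref{nq1la} to the quotient Lie algebroid of $A$ by the ideal system just described. By \cite[Thm. 4.4.3]{MK2} this quotient Lie algebroid is uniquely characterized (up to canonical isomorphism) by the property that the projection $A\to A/\!\sim$ is a Lie algebroid morphism onto a vector bundle over $M/G = M/F$. Stefanini's quotient $A/(L_{-1}\rtimes G)\to M/G$, by \cite[Thm. 3.2.1]{lucathesis} (together with the discussion of its fibration property following that theorem), also has the projection $A\to A/(L_{-1}\rtimes G)$ as a Lie algebroid morphism, and it is built over the same base $M/G$ from the same subbundle $B$ and the same descending action $\Theta$. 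Hence the two Lie algebroids satisfy the same universal property relative to the same data and must agree; equivalently, one checks directly that the derived-bracket Lie algebroid structure induced by the restriction $\tilde Q$ of $Q$ to $C(\cM)^{\cD}$ pulls back, under the inclusion $C(\cM)^{\cD}\hookrightarrow C(\cM)$, to the Lie algebroid structure on $A$, which is exactly the compatibility that forces it to be Stefanini's structure.

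I expect the main obstacle to be the bookkeeping in the middle paragraph: carefully verifying that the infinitesimal data produced by Corollary \ref{corquot}---the triple $(B,F,\nabla)$, and in particular the connection $\nabla$ on $A/B$ coming from $\mathrm{span}_{C^\infty(M)}\{\mu(L_0)\}$ acting as CDOs---integrates to exactly the global $\mathcal{LA}$-group action data $(B, G, \Theta)$ used by Stefanini, rather than to some a priori different action with the same linearization. This is where the hypotheses that $\psi$ is \emph{free and proper} (so that the integration is unobstructed and $\nabla$ has no holonomy) and that $\mu$ integrates to $\Psi$ in the sense of \cite[\S 2.3.2]{ZZL1} do the real work; once the actions are identified, the equality of the two quotients is a formal consequence of the uniqueness in \cite[Thm. 4.4.3]{MK2}.
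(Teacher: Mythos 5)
Your proposal is correct and follows essentially the same route as the paper: use strictness to reduce $\cD$ to the $C(\cM)$-span of $\mu(L)$, local freeness to get a genuine involutive distribution, freeness and properness of $\psi$ to obtain simplicity of $F$ and triviality of the holonomy of $\nabla$, then apply Corollary \ref{corquot} and conclude that the two quotient Lie algebroid structures coincide because in both constructions the projection from $A$ is a Lie algebroid morphism. The paper's own proof is just a more compressed version of your middle and final paragraphs.
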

\begin{proof}
 We have
 $\cD=Span_{C(\cM)}\{\mu(L)\}$.
By the local freeness assumption, the image under $\mu$ of a basis of $L$ provides a set of local homogeneous generators of $\cD $ whose evaluations at points of $M$ are linearly independent, hence $\cD$ is a distribution. $\cD$ is involutive since $\mu$ preserves brackets. The leaves of the distribution $F$ on $M$ are just the orbits of the free action $\psi|_M$ (the restriction of the $G$-action $\psi$ to $M$), so  $M/F$ is a smooth manifold. Let $B=span\{\mu(L_{-1})\}$,
then the holonomy of the partial connection $\nabla$ is
given by the action of $G$ on $A/B$ induced by $\psi$, so by the freeness of $\psi$ we conclude that the holonomy of $\nabla$ is trivial.

Hence we can apply Cor. \ref{corquot}.
The vector bundle obtained  quotienting $A$ by the  ideal system of Cor. \ref{corquot} agrees with the quotient of $A$ by the action $\Psi$, and the induced Lie algebroid structures agree because in both cases the projection map from $A$  is a Lie algebroid morphism.
\end{proof}

\subsection{Examples}\label{exa}

We present two examples of actions (as in Def. \ref{def:DGLAaction}) of a  strict Lie 2-algebra $L$ on an NQ-1 manifold  $\cM$.

 The first is an example where the image of the action   is a   distribution, which however is neither involutive nor
preserved by $[Q,\cdot]$.

\begin{ep}\label{notinv}[The image of the action is neither
involutive nor $Q$-invariant] Let $L=\R^2$ be the abelian 2-dimensional Lie algebra
(concentrated in degree zero), fix a basis $e_0$, $e_0'$. Let
$A=T\R^3$, so $\cM=T[1]\R^3$,
 on which we take the standard degree $0$ coordinates $x_1,x_2,x_3$
and the corresponding degree $1$ coordinates $\xi_i(=dx_i)$. The deRham vector field on $\cM$ reads $Q=\sum_i
\xi_i \frac{\partial}{\partial x_i}$.
Define
$\phi:=(\mu, \eta) \colon L \rightsquigarrow \chi(\cM)$ by\footnote{Notice that, by Cartan's formula, $\mu(e_0)$ acts on $C(\cM)=\Omega(\R^3)$  by the Lie derivative $\cL_{ \frac{\partial}{\partial  x_1}}$.}
\begin{align*}
  \mu(e_0)=&[Q,\frac{\partial}{\partial  \xi_1}]=\frac{\partial}{\partial  x_1}\\
\mu(e_0')=& [Q,x_1 \frac{\partial}{\partial \xi_3}-\frac{\partial}{\partial \xi_2}]=x_1 \frac{\partial}{\partial x_3}-\frac{\partial}{\partial x_2}+\xi_1 \frac{\partial}{\partial \xi_3},
\\
 \eta(e_0\wedge e_0')=&-\frac{\partial}{\partial \xi_3}.
\end{align*} 
One checks that $\phi$ is an $L_{\infty}$-morphism (conditions \eqref{constr1}-\eqref{constr4}).
 The $C(\cM)$-span of the image of
$\phi$ is $span_{C(\cM)}\{\frac{\partial}{\partial x_1},
  x_1 \frac{\partial}{\partial x_3}-\frac{\partial}{\partial x_2},
  \frac{\partial}{\partial  \xi_3}\}$.
It is a distributon, but is not involutive (because its restriction to the body $\R^3$ is   not involutive) nor
preserved by $[Q,\cdot]$.

On the other hand,
the distribution $\cD$ (defined in eq. \eqref{defD})  on $\cM$ is spanned by $
\frac{\partial}{\partial x_1},\frac{\partial}{\partial
x_2},\frac{\partial}{\partial x_3},\frac{\partial}{\partial \xi_3}$.
 $\cD$ is   involutive and its sections are preserved by $[Q,\cdot]$. The quotient $\cM/\cD$ is isomorphic to $\R^2[1]$ with vanishing
 homological vector field, which corresponds to the abelian 2-dimensional
 Lie algebra.
\end{ep}

Second, we display an example where the $C(\cM)$-module $\cD$ is   not involutive.

\begin{ep}\label{notin}[$\cD$ is not involutive]
Consider the strict Lie-2 algebra $L=\R[1]\oplus \R^2$, with zero bracket and differential
$\delta \colon \R[1] \to  \R^2,e_{-1}\mapsto e_0$, where $\{e_{-1}\}$ and $\{e_0,e'_0\}$ are bases of $\R$ and $\R^2$ respectively. Let $M$ be a manifold and $X,Y$ vector fields such that $[X,[X,Y]_M]_M$ does not lie in the
the $C^{\infty}(M)$-span of $X$ and $[X,Y]_M$. Take $\cM=(T[1]M,Q)$ where $Q$ is the deRham vector field. By  $[\cdot,\cdot]_M$ we denote the Lie bracket of vector fields on $M$, while by $[\cdot,\cdot]$ we denote the graded Lie   bracket of vector fields on $\cM$.

Consider the   action
\begin{align*}
\phi:=(\mu, \eta)\colon \R[1]\oplus \R^2 \rightsquigarrow \chi(\cM)
\end{align*}
given by
\begin{align*}
  \mu(e_{-1})&=\bar{X}\\
  \mu(e_0)&=[Q,\bar{X}]\\
\mu(e_0')&= [Q,\bar{Y}]\\
 \eta(e_0\wedge e_0')&=- \overline{[X,Y]_M}.
\end{align*}
Here the overline denotes the identification $\Gamma(TM)\cong\chi_{-1}(\cM), X \mapsto \bar{X}$ as in Prop. \ref{N1}.
It is easy to check that $\mu$ is an $L_{\infty}$-morphism,
i.e. that  eq. \eqref{constr1}-\eqref{constr4} are satisfied.
However $[\cD_0,\cD_{-1}]\not\subset \cD_{-1}$, as $$[\mu(e_0),\eta(e_0\wedge e_0')]=-[[Q,\bar{X}],\overline{[X,Y]_M}]=-\overline{[X,[X,Y]_M]_M}$$ does not lie in $\cD_{-1}
=span_{C^{\infty}(M)}\{\bar{X},\overline{[X,Y]_M}\}.$ This shows that $\cD$ is not involutive.

More concretely, we can take $M=\R^3$ with coordinates $x_1,x_2,x_3$ and $X=\pd{x_1}$, $Y=\frac{x_1^2}{2}\pd{x_3}-x_1\pd{x_2}$, as they satisfy $[X,Y]_M=  {x_1}\pd{x_3}- \pd{x_2}$ and $[X,[X,Y]_M]_M=\pd{x_3}$.
\end{ep}

\bibliographystyle{../bib/habbrv}
\bibliography{../bib/bibz}

\def\cprime{$'$} \def\cprime{$'$} \def\cprime{$'$} \def\cprime{$'$}
\begin{thebibliography}{10}

\bibitem{baez:2algebras}
J.~C. Baez and A.~S. Crans.
\newblock Higher-dimensional algebra. {VI}. {L}ie 2-algebras.
\newblock {\em Theory Appl. Categ.}, 12:492--538 (electronic), 2004.

\bibitem{brahic-zhu}
O.~Brahic and C.~Zhu.
\newblock Lie algebroid fibrations.
\newblock {\em Adv. Math.}, doi:10.1016/j.aim.2010.10.106, 2010,
  arXiv:1001.4904.

\bibitem{BCMZ}
H.~Bursztyn, A.~S. Cattaneo, R.~Metha, and M.~Zambon.
\newblock {Reduction of Courant algebroids via super-geometry}.
\newblock in preparation.

\bibitem{MaHenr}
H.~Bursztyn and M.~Crainic.
\newblock Dirac structures, momentum maps, and quasi-{P}oisson manifolds.
\newblock In {\em The breadth of symplectic and {P}oisson geometry}, volume 232
  of {\em Progr. Math.}, pages 1--40. Birkh{\"a}user Boston, Boston, MA, 2005.

\bibitem{AlbICM}
A.~S. Cattaneo.
\newblock From topological field theory to deformation quantization and
  reduction.
\newblock In {\em International {C}ongress of {M}athematicians. {V}ol. {III}},
  pages 339--365. Eur. Math. Soc., Z\"urich, 2006.

\bibitem{AlbFlRio}
A.~S. Cattaneo and F.~Sch{\"a}tz.
\newblock Introduction to supergeometry.
\newblock {\em Rev. Math. Phys.}, 23(6):669--690, 2011.

\bibitem{CZ2}
A.~S. Cattaneo and M.~Zambon.
\newblock {A super-geometric approach to Poisson reduction}.
\newblock \texttt{Arxiv:1009.0948}.

\bibitem{JotzOrtizFol}
M.~Jotz and C.~Ortiz.
\newblock Foliated groupoids and their infinitesimal data.
\newblock 09 2011, Arxiv \texttt{1109.4515}.

\bibitem{yvette}
Y.~Kosmann-Schwarzbach.
\newblock Derived brackets.
\newblock {\em Lett. Math. Phys.}, 69:61--87, 2004.

\bibitem{MC-YKS}
Y.~Kosmann-Schwarzbach and K.~C.~H. Mackenzie.
\newblock Differential operators and actions of {L}ie algebroids.
\newblock In {\em Quantization, {P}oisson brackets and beyond ({M}anchester,
  2001)}, volume 315 of {\em Contemp. Math.}, pages 213--233. Amer. Math. Soc.,
  Providence, RI, 2002.

\bibitem{LadaMarkl}
T.~Lada and M.~Markl.
\newblock Strongly homotopy {L}ie algebras.
\newblock {\em Comm. Algebra}, 23(6):2147--2161, 1995.

\bibitem{MK2}
K.~C.~H. Mackenzie.
\newblock {\em General theory of {L}ie groupoids and {L}ie algebroids}, volume
  213 of {\em London Mathematical Society Lecture Note Series}.
\newblock Cambridge University Press, Cambridge, 2005.

\bibitem{rajthesis}
R.~A. Mehta.
\newblock {Supergroupoids, double structures, and equivariant cohomology}, PhD
  thesis, University of California, Berkeley, 2006.
  \texttt{arXiv:math.DG/0605356}.

\bibitem{RajMarco}
R.~A. Mehta and M.~Zambon.
\newblock {$L_{\infty}$-algebra actions}.
\newblock {ArXiv}.

\bibitem{SWLett}
P.~{\v{S}}evera.
\newblock {Letter to Alan Weinstein}.
\newblock \texttt{http://sophia.dtp.fmph.uniba.sk/\~{}severa/letters/no8.ps}.

\bibitem{s:funny}
P.~{\v{S}}evera.
\newblock Some title containing the words ``homotopy'' and ``symplectic'', e.g.
  this one.
\newblock In {\em Travaux math\'ematiques. {F}asc. {XVI}}, Trav. Math., XVI,
  pages 121--137. Univ. Luxemb., Luxembourg, 2005.

\bibitem{Se}
P.~{\v{S}}evera.
\newblock Poisson actions up to homotopy and their quantization.
\newblock {\em Lett. Math. Phys.}, 77(2):199--208, 2006.

\bibitem{lucathesis}
L.~Stefanini.
\newblock {On Morphic Actions and Integrability of LA-Groupoids}.
\newblock PhD thesis, \texttt{arXiv.math:0902.2228}, 2009.

\bibitem{vaintrob}
A.~Y. Va{\u\i}ntrob.
\newblock Lie algebroids and homological vector fields.
\newblock {\em Uspekhi Mat. Nauk}, 52(2(314)):161--162, 1997.

\bibitem{ZZL1}
M.~Zambon and C.~Zhu.
\newblock {Higher Lie algebra actions on Lie algebroids}.
\newblock 12 2010, {ArXiv \texttt{1012.0428v2}}.

\end{thebibliography}

\end{document}